\newtheorem{theorem}{Theorem}
\newtheorem{proposition}[theorem]{Proposition}
\newtheorem{lemma}[theorem]{Lemma}
\newtheorem{conjecture}[theorem]{Conjecture}
\theoremstyle{definition}
\newtheorem{example}[theorem]{Example}
\newtheorem{remark}[theorem]{Remark}
\newtheorem{claim}[theorem]{Claim}
\newcommand{\defin}[1]{\emph{#1}}
\newcommand{\setC}{\mathbb{C}}
\newcommand{\xvec}{\mathbf{x}}
\newcommand{\tpi}{\tilde{\pi}}
\newcommand{\ttheta}{\tilde{\theta}}
\newcommand{\symS}{S}
\newcommand{\key}{\mathcal{K}}
\newcommand{\macdonaldE}{\mathrm{E}}
\newcommand{\macdonaldH}{{\mathrm{\tilde{H}}}}
\newcommand{\schurS}{\mathrm{s}}
\newcommand{\CoInvFree}{\mathrm{CoInvFree}}
\newcommand{\InvFree}{\mathrm{InvFree}}
\newcommand{\NAF}{\mathrm{NAF}}
\newcommand{\FIL}{\mathrm{FIL}}
\newcommand{\SSYT}{\mathrm{SSYT}}
\DeclareMathOperator{\Des}{\mathrm{Des}}
\DeclareMathOperator{\leg}{leg}
\DeclareMathOperator{\arm}{arm}
\DeclareMathOperator{\inv}{inv}
\DeclareMathOperator{\maj}{maj}
\DeclareMathOperator{\coinv}{coinv}
\DeclareMathOperator{\charge}{charge}
\DeclareMathOperator{\sort}{sort}
\DeclareMathOperator{\cocharge}{cocharge}
\DeclareMathOperator{\reverse}{rev}
\DeclareMathOperator{\cw}{cw}
\DeclareMathOperator{\ccw}{ccw}
\title{A major-index preserving map on fillings}
\author{Per Alexandersson and Mehtaab Sawhney}
\begin{document}

\maketitle

\begin{abstract}
We generalize a map by S. Mason regarding two combinatorial models for key polynomials,
in a way that accounts for the major index.

We also define similar variants of this map, that regards alternative 
models for the modified Macdonald polynomials at $t=0$, 
thus partially answer a question by J. Haglund.

These maps imply certain uniqueness property regarding inversion-- and coinversion-free fillings,
which allows us to generalize the 
notion of charge to a non-symmetric setting, thus answering a question by A. Lascoux. 
The analogous question in the symmetric setting proves a conjecture by K. Nelson.
\end{abstract}

\bigskip\noindent \textbf{Keywords:}
Macdonald polynomials, Hall--Littlewood polynomials, charge, major index, Demazure characters, key polynomials.

\bigskip\noindent \small Mathematics Subject Classifications: 05E10, 05E05

\section{Introduction}

The area of Macdonald polynomials and related combinatorics has been very active in the last $10$ years.
In 2006, James Haglund, Mark Haiman and Nick Loehr described a combinatorial formula 
for the non-symmetric Macdonald polynomials, \cite{HaglundNonSymmetricMacdonald2008}.
This combinatorial model specializes to a model for Demazure characters, 
or \emph{key polynomials} and \emph{Demazure atoms},
which is studied in \emph{e.g.} \cite{Pun2016Thesis, Mason2009}.

This model includes a \emph{basement}, a certain parameter $\sigma \in \symS_n$, 
which can be modeled combinatorially, or via Demazure--Lusztig 
operators, see \cite{Ferreira2011,Alexandersson15gbMacdonald}.
A generalization to other types using the Ram--Yip combinatorial 
model \cite{RamYip2011} can be found in \cite{Feigin2016}.
The fillings we consider in this paper are the combinatorial objects that 
generate the specialization $\macdonaldE^\sigma_\alpha(\xvec;q,0)$ 
of \emph{permuted-basement non-symmetric Macdonald polynomials} and the closely related  $\macdonaldH^\sigma_\alpha(\xvec;q,0)$ which are a specialization of
\emph{permuted-basement modified Macdonald polynomials}.

Note that the non-symmetric Macdonald polynomial $\macdonaldE_\alpha(\xvec;q,t)$
specialize to the \emph{key polynomial} (or Demazure character) $\key_\alpha(\xvec)$ at $q=t=0$,
so the specialization $\macdonaldE_\alpha(\xvec;q,0)$ can be considered as a
$q$-deformation of key polynomials.
These can be seen as a non-symmetric extension of 
the modified Hall--Littlewood polynomials $\macdonaldH_\lambda(\xvec;q)$ in the following sense:
we have that $\omega \macdonaldE_\lambda(\xvec;q,0) = q^{\ast(\lambda)} \macdonaldH_\lambda(x_1,\dotsc,x_n;q^{-1})$ whenever $\lambda$
is a partition of length $n$, and $\ast(\lambda)$ is an appropriate integer\footnote{The maximum number of inversion triples in the diagram of shape $\lambda$.}.
This identity follows from properties of LLT polynomials, see \emph{e.g.,} \cite{Haglund2005Macdonald}.

\subsection{Overview of results}

We construct maps between certain fillings that give the evaluation of non-symmetric Macdonald polynomials at
$t=0$. These maps preserve the major index statistic.
The existence of such maps are implied by relations given by Demazure operators,
but have not been constructed explicitly. 
We construct such maps with particularly nice properties not implied by the operators themselves ---
in particular, we show that the maps can be made to preserve column sets.
These maps and their properties allow us to solve several problems in this area:

\begin{itemize}

\item In \cite{Mason2009}, two models for key polynomials are given, with a column-set preserving 
bijection showing that these are equal.
We generalize this map to incorporate a $q$-parameter, corresponding to major index.
It is worth noting that even when $q=0$, the bijection between the sets we consider is nontrivial ---
this case was treated in \cite{Kurland2016} and the map given in the present paper specializes to the one in \cite{Kurland2016} when dealing with $q=0$.

\item We explicitly construct a biword for coinversion-free fillings
and extend the biword given for modified Macdonald polynomials in \cite{Haglund2005Macdonald}.
In the first case, the charge of the biword is shown to be equal to the major index of the filling, 
while in the second case the cocharge of the biword is equal to the major index of the filling. 
The extension of the second biword beyond the partition case
proves a generalization of a conjecture given in \cite{Nelsen2005}.

\item We demonstrate a bijective proof that 
 $\macdonaldH^\sigma_{\sigma \lambda}(\xvec;q,0) = \macdonaldH_{\lambda}(\xvec;q,0)$,
 for any fixed basement $\sigma \in \symS_n$.
 In particular, we show that the bijection between the corresponding fillings can be taken to be \emph{column-set-preserving},
 a property that uniquely defines this bijection.
\end{itemize}

Our proof method amounts to first constructing the maps for fillings with two rows, and then proving that
these maps are compatible with a larger filling.
The second part of this proof is partially done through computer verification, 
due to the large (but finite) number of cases that needs to be considered.
\medskip 

The paper is structured as follows:
In \cref{sec:preliminaries}, we introduce the necessary terminology regarding the 
combinatorial model for non-symmetric Macdonald polynomials.
In \cref{sec:main} and \cref{sec:main2}, we define and prove properties of the maps. 
Finally, in \cref{sec:applications}, we examine the various applications and consequences of the maps.

\section{Non-symmetric Macdonald polynomials and fillings}\label{sec:preliminaries}

In this section, we review the necessary terminology regarding the combinatorial model for non-symmetric Macdonald polynomials,
and modified Macdonald polynomials. We use the same notation as in 
\cite{Alexandersson15gbMacdonald}, which differs slightly from the one used in \cite{qtCatalanBook,HaglundNonSymmetricMacdonald2008}.
In particular, we use \emph{English notation}, and not the ``skyline'' way of presenting fillings.

\medskip

An \emph{augmented diagram} of shape $\alpha$ is a Young diagram where the length of row $i$ from the top is given by $1+\alpha_i$.
The leftmost column is considered special and is referred to as the \emph{basement}.
An \emph{augmented filling} is an assignment of natural numbers to the boxes in the diagram.
We specify the entries in the basement by listing them from top to bottom --- in most places,
the basement is a permutation expressed in the one-line notation.
The \emph{weight} of a filling is the multiset of entries that are not part of the basement.
We let $\xvec^F \coloneqq \prod_{u \in F} x_u$, where $u$ ranges over all non-basement entries in $F$.

\begin{example}\label{ex:nonAttackingFilling}
Below is an augmented filling with shape $(2,4,0,3,2)$, basement given by $(4,5,1,3,2)$ and 
$\xvec^F = x_1^2 x_2^2 x_3^3x_4 x_5^2$.
\[
\begin{ytableau}
\mathbf 4 & 1 & 2 \\
\mathbf 5 & 5 & 4 & 1 & 3\\
\mathbf 1  \\
\mathbf 3 & 3 & 3 & 5 \\
\mathbf 2 & 2
\end{ytableau}
\]
\end{example}

Let $F$ be an augmented filling. Two boxes $a$, $b$, are said to be \defin{attacking}
if $F(a)=F(b)$ and the boxes are either in the same column,
or they are in adjacent columns with the rightmost box in a row strictly below the other box.
A filling is \defin{non-attacking} if there are no attacking pairs of boxes. 
The filling in \cref{ex:nonAttackingFilling} is non-attacking.

\subsection{Inversions, coinversions and descents}

A \defin{triple of type $A$} is an arrangement of boxes, $a$, $b$, $c$,
located such that $a$ is immediately to the left of $b$, and $c$ is somewhere below $b$,
and the row containing $a$ and $b$ is at least as long as the row containing $c$.
In a similar fashion, a \defin{triple of type $B$} is an arrangement of boxes, $a$, $b$, $c$,
located such that $a$ is immediately to the left of $b$, and $c$ is somewhere above $a$,
and the row containing $a$ and $b$ is \emph{strictly} longer than the row containing $c$.

A type $A$ triple is an \defin{inversion triple} if the entries ordered increasingly,
form a \emph{counter-clockwise} orientation. Similarly, a type $B$ triple is an inversion triple
if the entries ordered increasingly form a \emph{clockwise} orientation.
In the case of equal entries, the one with largest subscript in \cref{eq:invTriplets}
is considered to be largest.
\begin{equation}\label{eq:invTriplets}
\text{Type $A$:}\quad
\ytableausetup{centertableaux,boxsize=1.2em}
\begin{ytableau}
 a_3 & b_1 \\
 \none  & \none[\scriptstyle\vdots] \\
\none & c_2 \\
\end{ytableau}
\qquad
\text{Type $B$:}\quad
\ytableausetup{centertableaux,boxsize=1.2em}
\begin{ytableau}
c_2 & \none \\
\none[\scriptstyle\vdots]  & \none \\
a_3 & b_1 \\
\end{ytableau}
\end{equation}
A triple which is not an inversion triple is called a \emph{coinversion triple}.

\medskip 

Let $F$ be an augmented filling and suppose $b$ is a non-basement box, 
and $a$ is the box immediately to the left of $b$.
We say that $b$ is a \defin{descent} of $F$ if $F(a) < F(b)$.
The set of descents of $F$ is denoted $\Des(F)$.

The \defin{leg} of a box $u$, $\leg(u)$, in an augmented diagram is the number of boxes to the right of $u$.
The \defin{arm}, $\arm(u)$, of $u = (r,c)$ in an augmented diagram $\alpha$ is the total cardinality of
the sets
\begin{align*}
\{ (r', c) \in \alpha : r < r' \text{ and } \alpha_{r'} \leq \alpha_r \} \text{ and } \\
\{ (r', c-1) \in \alpha : r' < r \text{ and } \alpha_{r'} < \alpha_r \}.
\end{align*}
Given an augmented filling $F$, the \emph{major index}, 
$\maj(F)$, is defined as 
\[
\maj(F) \coloneqq \sum_{u \in \Des(F)} \leg(u)+1,
\]
and the number of inversions, $\inv(F)$ is the number of inversion triples in $F$.
Similarly, $\coinv(F)$ is the number of coinversion triples in $F$.

\medskip

Let $\sigma \in \symS_n$ and let $\alpha$ be a composition with $n$ parts,
and let $\NAF(\alpha,\sigma)$ denote the set of non-attacking fillings of the augmented diagram of shape $\alpha$
and basement $\sigma$, with entries in $1\dotsc,n$.
The \defin{non-symmetric permuted basement Macdonald polynomial} 
$\macdonaldE^\sigma_\alpha(\xvec;q,t)$ is defined as
\begin{equation}\label{eq:nonSymmetricMacdonaldBasement}
\macdonaldE^\sigma_\alpha(\xvec; q,t) = \sum_{ F \in \NAF(\alpha,\sigma)} \xvec^F q^{\maj F} t^{\coinv F} \!\!\!
\prod_{ \substack{ u \in F \\ F(u_-)\neq F(u) }} \!\!\! \frac{1-t}{1-q^{1+\leg u} t^{1+\arm u}},
\end{equation}
where $u_-$ denotes the box to the left of $u$,
and we consider $F(u_-)$ not to be equal to $F(u)$ if $u$ is a box in the basement.

The ordinary non-symmetric Macdonald polynomial $\macdonaldE_\alpha(\xvec;q,t)$ considered in \cite{HaglundNonSymmetricMacdonald2008}
is recovered when taking $\sigma = w_0 = (n,\dotsc,2,1)$, that is, the unique longest permutation in $\symS_n$. 
The basement $w_0$ is often referred to as \emph{the key basement} --- the reason will be evident further down.

\begin{example}
 The set $\NAF(\alpha,\sigma)$ for $\alpha = (1,0,2,2)$, $\sigma=(2,1,3,4)$
 consists of the following augmented fillings:
\begin{align*}
\substack{\young(21,1,332,444)\\ \coinv: 0\\ \maj: 0} \quad
\substack{\young(21,1,333,442)\\ \coinv: 0\\ \maj: 0} \quad
\substack{\young(21,1,333,444)\\ \coinv: 0\\ \maj: 0} \quad
\substack{\young(21,1,334,442)\\ \coinv: 2\\ \maj: 1}\\
\substack{\young(22,1,331,444)\\ \coinv: 1\\ \maj: 0} \quad 
\substack{\young(22,1,333,441)\\ \coinv: 1\\ \maj: 0} \quad 
\substack{\young(22,1,333,444)\\ \coinv: 0\\ \maj: 0} \quad 
\substack{\young(22,1,334,441)\\ \coinv: 3\\ \maj: 1}
\end{align*}
\end{example}
\bigskip

Furthermore the \emph{modified Macdonald polynomials} are a class of symmetric functions, 
defined\footnote{This is the same definition as in \cite{Haglund2005Macdonald}, 
by using the fact that $\macdonaldH_\alpha(\xvec; q,t) = \macdonaldH_{\alpha'}(\xvec; t,q)$.
} via
\begin{equation}\label{eq:symmetricMacdonaldDef}
\macdonaldH_\alpha(\xvec; q,t) = \sum_{ F \in \FIL(\alpha,w_0)} \xvec^F q^{\maj F} t^{\inv F}
\end{equation}
where $\FIL(\alpha,w_0)$ is the set of fillings of \emph{partition shape} $\alpha$ 
with no restriction whatsoever.
The basement $w_0$ is now a \emph{big basement} --- a decreasing sequence of ``infinities'',
that is, $w_0 = (\infty_n,\infty_{n-1},\dotsc,\infty_1)$,
where we consider $\infty_i < \infty_j $ if $i<j$,
and $\infty_i>k$ for all natural numbers $i$, $k$ in the definition of major index
and inversion triples, see \cite{Haglund2005Macdonald} for details.

It is possible to generalize the modified Macdonald polynomials 
to \emph{permuted basement modified Macdonald Polynomials} by considering a 
big basement $\sigma$ as the sequence $(\infty_{\sigma_1},\infty_{\sigma_{2}},\dotsc,\infty_{\sigma_n})$. Then define
\begin{equation}\label{eq:symmetricMacdonaldDef2}
\macdonaldH^\sigma_\alpha(\xvec; q,t) = \sum_{ F \in \FIL(\alpha,\sigma)} \xvec^F q^{\maj F} t^{\inv F}
\end{equation}
where $\FIL(\alpha,\sigma)$ is the set of fillings of \emph{composition shape} $\alpha$ 
with no restriction whatsoever. 
However, one can show that these polynomials (up to a multiple of a power of $t$) 
only depend on the parts of $\alpha$, see \emph{e.g.} \cref{eq:generalMacdonaldHIdentity} further down.
This fact is not easy to see from the definition here.

\subsection{Specializations of Macdonald polynomials}

The main topic of this paper is the 
specialization of \eqref{eq:nonSymmetricMacdonaldBasement} and \eqref{eq:symmetricMacdonaldDef2}
at $t=0$. In the first case we have the simplified expression
\begin{equation}\label{eq:qAtoms}
\macdonaldE^\sigma_\alpha(\xvec; q,0) = \sum_{ F \in \CoInvFree(\alpha,\sigma)} \xvec^F q^{\maj F}
\end{equation}
where $\CoInvFree(\alpha,\sigma)$ is the set of coinversion-free fillings of shape $\alpha$ and basement $\sigma$.
Note that a coinversion-free filling is also automatically non-attacking.
We define the \emph{key polynomial} $\key_\alpha(\xvec)$ as the 
specialization $\macdonaldE^{w_0}_\alpha(\xvec; 0,0)$ --- note the use of the key basement $w_0$.

\emph{Caution:} our notation differs slightly from other literature: $\kappa_{\alpha} = \key_{\reverse(\alpha)}$,
where $\kappa_{\alpha}$ is the notation for key polynomials in \emph{e.g.} \cite{ReinerShimozono1995}.
Our notation fulfills the relation $\key_{\lambda}(x_1,\dotsc,x_n) = \schurS_{\lambda}(x_1,\dotsc,x_n)$
whenever $\lambda$ is a partition with $n$ parts.

\bigskip 

Similarly, we have the simplified expression
\begin{equation}\label{eq:basementPermuted}
\macdonaldH^\sigma_\alpha(\xvec; q,0) = \sum_{ F \in \InvFree(\alpha,\sigma)} \xvec^F q^{\maj F}
\end{equation}
where $\InvFree(\alpha,\sigma)$ is the set of inversion-free fillings of shape $\alpha$ and basement $\sigma$. 
The standard \emph{modified Hall--Littlewood polynomials}, $\macdonaldH_\alpha(\xvec; q,0)$ are recovered 
by letting $\sigma=w_0$ and $\alpha$ be a partition.

\begin{figure}[!ht]
\includegraphics[width=0.9\textwidth]{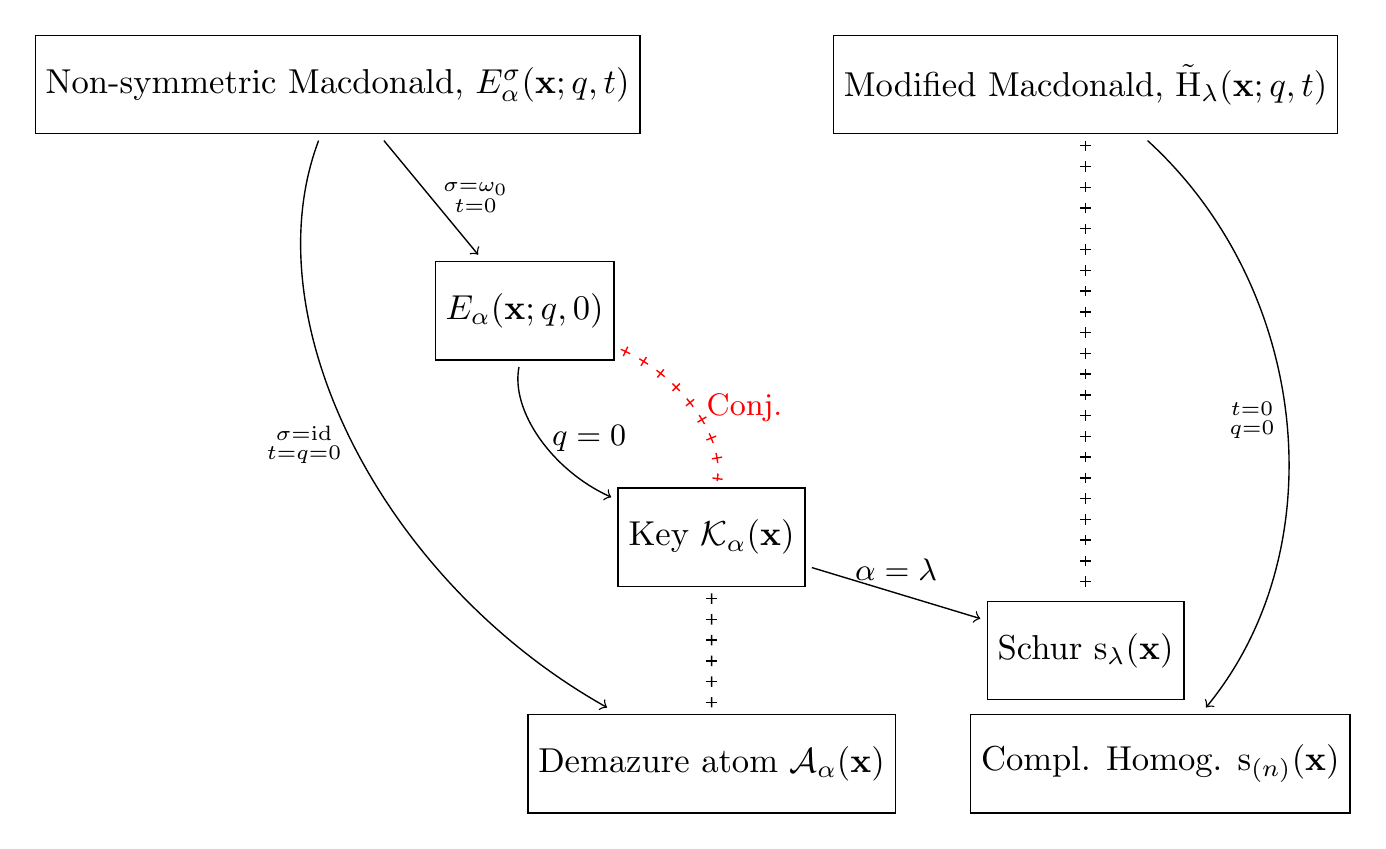}
\caption{
Here is an overview of the polynomials.
Arrows down indicate the relation \emph{specializes to},  and plus-arrows down indicate \emph{expands positively into}.
Note that one relation is \cref{conj:positiveKeyExp}, which we discuss further down.
Here, $\lambda$ is a partition, and $\alpha$ is a composition.
}\label{fig:polynomialsOverview}
\end{figure}

\subsection{Brief background on Demazure operators}\label{subsec:operators}

We only use the following operators briefly in \cref{prop:bijExistence} below,
and this subsection is only to give some context.
\medskip 

The non-symmetric Macdonald polynomials and the more general permuted basement Macdonald polynomials 
can also be defined using Demazure--Lusztig operators,
see \emph{e.g.}, \cite{HaglundNonSymmetricMacdonald2008,Alexandersson15gbMacdonald} for details.
\medskip 

Let $s_i$ act by simple transposition on the indices of the $x_i$, and define
\[
 \partial_i = \frac{1-s_i}{x_i-x_{i+1}}, \quad \pi_i = \partial_i x_i, \quad \theta_i = \pi_i - 1.
\]
It is straightforward to see that $\partial_i$, $\pi_i$ and $\theta_i$ are operators on $\setC[\xvec]$.
The operators $\pi_i$ and $\theta_i$ are used to define the key polynomials and Demazure atoms, 
see \emph{e.g.} \cite{Lascoux1990Keys,Mason2009}.
We define the following $t$-deformations of the above operators.
\begin{align}
\tpi_i(f) = (1-t)\pi_i(f) + t s_i(f)  \qquad \ttheta_i(f) = (1-t)\theta_i(f) + t s_i(f).
\end{align}
The $\ttheta_i$ are the \emph{Demazure--Lusztig operators}, and generators of the affine Hecke algebra.
The $\tpi_i$ and $\ttheta_i$ both satisfy the braid relations, and $\tpi_i \ttheta_i =  \ttheta_i\tpi_i = t$.
These operators act on the basement of permuted basement Macdonald polynomials, as well as the indexing composition.

\section{A column-set preserving map}\label{sec:main}

By using the operators in \cref{subsec:operators} together with 
the properties proved in \cite{Alexandersson15gbMacdonald},
it is possible to show the existence of a weight preserving and 
major index preserving bijection between two sets of coinversion-free fillings:
\begin{proposition}\label{prop:bijExistence}
Let $\CoInvFree(\alpha,\sigma)$ be the set of coinversion-free fillings with shape $\alpha$ and basement $\sigma$.
Suppose $\sigma_i = \sigma_{i+1}+1$ and $\alpha_i < \alpha_{i+1}$. Then there exists a bijection
\begin{align}\label{eq:bijExistence}
 \phi : \CoInvFree(\alpha,\sigma) \longleftrightarrow \CoInvFree(s_i \alpha,\sigma) \sqcup \CoInvFree(s_i \alpha, s_i\sigma)
\end{align}
with the property that $\phi$ preserves major index.
\end{proposition}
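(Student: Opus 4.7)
The plan is to prove \cref{prop:bijExistence} non-constructively, by deducing the claimed three-set identity from a polynomial identity that comes out of the Demazure--Lusztig operator framework recalled in \cref{subsec:operators}. The approach has the drawback that the resulting $\phi$ is opaque combinatorially, which is precisely the shortcoming that motivates the explicit, column-set-preserving maps constructed in the subsequent sections.

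First I would combine the recurrences from \cite{Alexandersson15gbMacdonald} that relate $\macdonaldE^\sigma_\alpha$, $\macdonaldE^\sigma_{s_i\alpha}$, and $\macdonaldE^{s_i\sigma}_{s_i\alpha}$ through the operators $\tpi_i$ and $\ttheta_i$. The hypotheses $\sigma_i = \sigma_{i+1}+1$ and $\alpha_i < \alpha_{i+1}$ are exactly what is needed for the basement swap $\sigma \leftrightarrow s_i\sigma$ and the composition swap $\alpha \leftrightarrow s_i\alpha$ to each be governed by a single one-step operator identity; stacking them produces, after specialization at $t=0$, a clean three-term identity of the form
\[
\macdonaldE^\sigma_\alpha(\xvec;q,0) = \macdonaldE^\sigma_{s_i\alpha}(\xvec;q,0) + \macdonaldE^{s_i\sigma}_{s_i\alpha}(\xvec;q,0).
\]
Applying \eqref{eq:qAtoms} to each term rewrites this as
\[
\sum_{F \in \CoInvFree(\alpha,\sigma)} \!\!\! \xvec^F q^{\maj F}
= \sum_{F \in \CoInvFree(s_i\alpha,\sigma)} \!\!\! \xvec^F q^{\maj F}
+ \sum_{F \in \CoInvFree(s_i\alpha,s_i\sigma)} \!\!\! \xvec^F q^{\maj F}.
\]
Since all three generating functions have nonnegative integer coefficients, each side stratifies into equinumerous finite blocks indexed by the pair (weight, major index). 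Any stratum-respecting matching between fillings on the left and fillings on the right therefore furnishes a map $\phi$ that preserves both weight and $\maj$, as required.

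The principal obstacle is the first step: pinning down exactly which combination of the operator recurrences in \cite{Alexandersson15gbMacdonald} produces the clean three-term identity at $t=0$, and in particular verifying that the rational prefactor inherent to any $\tpi_i$/$\ttheta_i$ identity neither becomes singular nor fails to specialize to $1$ as $t \to 0$. Once this identity is in hand, every remaining step is formal: positivity of the expansions in the monomial--$q^k$ basis forces the block-by-block cardinality equality, and arbitrary pairing within each block finishes the proof. Because this argument produces a $\phi$ with no intrinsic combinatorial structure (e.g.\ no column-set preservation), it is strictly weaker than the explicit maps built by hand in \cref{sec:main} and \cref{sec:main2}.
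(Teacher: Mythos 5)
Your proposal is correct and follows essentially the same route as the paper: the authors derive exactly the three-term identity $\macdonaldE^\sigma_\alpha(\xvec;q,0) = \macdonaldE^\sigma_{s_i\alpha}(\xvec;q,0) + \macdonaldE^{s_i\sigma}_{s_i\alpha}(\xvec;q,0)$ from the operator relations of \cite{Alexandersson15gbMacdonald} (via $\macdonaldE^\sigma_\alpha = \pi_i\macdonaldE^\sigma_{s_i\alpha} = (1+\theta_i)\macdonaldE^\sigma_{s_i\alpha}$ at $t=0$) and then conclude, exactly as you do, that positivity of the coefficient-wise expansions forces the existence of a weight- and $\maj$-preserving matching.
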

\begin{proof}
Using relations in \cite{Alexandersson15gbMacdonald}, we have
\begin{align}
 \macdonaldE^\sigma_{\alpha}(\xvec;q,0) &= \pi_i \macdonaldE^\sigma_{s_i \alpha}(\xvec;q,0) \label{eq:shapeperm}\\
 &= (1+\theta_i) \macdonaldE^\sigma_{s_i \alpha}(\xvec;q,0) \notag \\
 &= \macdonaldE^\sigma_{s_i \alpha}(\xvec;q,0) +  \macdonaldE^{s_i\sigma}_{s_i \alpha}(\xvec;q,0) \notag,
\end{align}
where $s_i$ acts via simple transpositions on the parts of $\alpha$.

Since $\macdonaldE^\sigma_{\alpha}(\xvec;q,0)$ is the weighted sum over the elements in 
$\CoInvFree(\alpha,\sigma)$, and $\macdonaldE^\sigma_{s_i \alpha}(\xvec;q,0)$ 
and $\macdonaldE^{s_i\sigma}_{s_i \alpha}(\xvec;q,0)$
are the weighted sums over $\CoInvFree(s_1 \alpha,\sigma)$ and $\CoInvFree(s_1 \alpha,\sigma)$ respectively.
It follows that a weight-preserving bijection must exist.
\end{proof}

\bigskip 
Given a filling, its \emph{column-sets} is simply the list of (multi)sets of entries in each column.
For example, the filling
\[
 \young(22,1,331,444) \text{ has column sets } (\{1,2,3,4\}, \{2,3,4\}, \{1,4\}).
\]
The purpose of this paper is to explicitly construct a bijection $\phi$
with the additional property that it is \emph{column-set preserving},
that is, $F$ and $\phi(F)$ have the same column sets.
Note that it is not clear a priori that we can impose such a strong condition on $\phi$.
\medskip 

In order construct such a $\phi$ --- which turns out to be unique ---
we first prove the following statement:
\begin{proposition}\label{prop:colSetPresInj}
Let $\CoInvFree(\alpha,\sigma)$ be the set of coinversion-free fillings with shape $\alpha$ and basement $\sigma$.
Suppose $\sigma_i >\sigma_{i+1}$ and $\alpha_i > \alpha_{i+1}$. Then there is an \emph{injection}
\[
 \phi : \CoInvFree(\alpha,\sigma) \hookrightarrow \CoInvFree(s_i \alpha,\sigma) \sqcup \CoInvFree(s_i \alpha, s_i\sigma)
\]
with the property that $\phi$ preserves column sets and major index. 

Furthermore, $\phi$ is a bijection whenever $\sigma_i=\sigma_{i+1}+1$.
\end{proposition}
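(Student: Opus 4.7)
The plan is to first construct $\phi$ for fillings with only two rows (indices $i$ and $i{+}1$), and then lift it to arbitrary fillings by applying the two-row construction locally to that adjacent pair while leaving all other rows fixed. The reduction to two rows is motivated by the fact that both the major index and the coinversion-triple conditions involve a pair of rows at a time, so once the two-row map is column-set preserving (so column multisets are unchanged), the only extra verification needed when lifting is that swapping rows $i, i{+}1$ with the exchanged row lengths prescribed by $s_i\alpha$ does not create new coinversion triples between $\{i, i{+}1\}$ and some third row $k$. Because the column sets are unchanged, that check reduces to a finite case analysis driven by the relative sizes of $\alpha_i$, $\alpha_{i+1}$ and $\alpha_k$, and by how the entries in rows $i, i{+}1$ are permuted within their columns.

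For the two-row case with basement $(\sigma_i, \sigma_{i+1})$, $\sigma_i > \sigma_{i+1}$, and shape $(\alpha_i, \alpha_{i+1})$, $\alpha_i > \alpha_{i+1}$, the image must have the same column sets but shape $(\alpha_{i+1}, \alpha_i)$. Hence for each column $j \le \alpha_{i+1}$ the image either keeps or swaps the two cells, while for $\alpha_{i+1} < j \le \alpha_i$ the cell, originally in row $i$, is forced to row $i{+}1$ in the image. I would process the columns sequentially from left to right, using the already-placed entries together with the coinversion-free requirement in the image to force each choice. Since the basement in the image is either $(\sigma_i, \sigma_{i+1})$ or $(\sigma_{i+1}, \sigma_i)$, the decision made in the first column simultaneously determines which component of the disjoint union $\CoInvFree(s_i\alpha, \sigma) \sqcup \CoInvFree(s_i\alpha, s_i\sigma)$ contains the image.

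The main obstacle is verifying that this local procedure always terminates with a coinversion-free filling and always preserves the major index. This amounts to a careful but finite case analysis at each column, showing that the descent contribution is preserved under the prescribed local move and that coinversion-freeness is locally enforceable given the forced rightmost columns. Injectivity would then follow from constructing a left-inverse by running the same column-by-column rule in reverse; well-definedness is automatic since the image column sets and shape, together with a fixed source basement in the preimage, uniquely determine the source column entries. Finally, when $\sigma_i = \sigma_{i+1} + 1$, no entry of the filling can lie strictly between $\sigma_{i+1}$ and $\sigma_i$, which removes the only obstruction that could prevent the reverse procedure from succeeding on every element of the target; combined with injectivity, this upgrades $\phi$ to a bijection in that case, recovering \cref{prop:bijExistence} in a column-set-preserving form.
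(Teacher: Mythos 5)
Your high-level architecture matches the paper's (a two-row filling rule, then a compatibility check against a third row), but two of your concrete steps would fail as described. First, the direction of the column-by-column construction is wrong. In the image $F'$ the longer row sits on the bottom, so every coinversion constraint internal to the two rows is a type~$B$ triple tying \emph{both} entries of column $j$ to the \emph{bottom entry of column $j+1$}; consequently the forced, deterministic propagation runs from right to left, seeded by the cells in columns $\alpha_{i+1}+1,\dotsc,\alpha_i$ that must drop into the long row, and the basement --- i.e.\ which component of the disjoint union receives $F'$ --- is the \emph{last} thing determined. Processing left to right as you propose, the very first decision is the choice of basement, which at that point is a free binary choice, and each subsequent triple constrains only one vertex (the new bottom entry), so both arrangements of a column can be locally admissible and a greedy choice can dead-end against the column to its right. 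Your claim that the left-to-right rule ``forces each choice'' is therefore false; this is exactly why the paper's filling rule in \cref{lem:tworowCase} starts at the right end of the long row. Relatedly, major-index preservation is not a columnwise check: descents can merge or split under the rule (in the paper's Subcase~4b two descents of $F$ become one descent of $F'$ with the same total leg contribution), so one needs the global bookkeeping by induction on the length of the shorter row.

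Second, the compatibility with a third row is not the ``finite case analysis driven by relative sizes'' you describe. There are local $2\times 3$ configurations in which both arrangements of the swapped pair are coinversion-free against rows $i,i+1$ and locally unobjectionable against the third row, yet only one is globally consistent; deciding which one requires propagating an invariant (the ``down-increasing condition'' of \cref{clm:badCaseFix}) along all columns to the right until the end of the shortest row, where a contradiction is extracted. That non-local argument is the hardest part of \cref{lem:fillingcompatible} and is absent from your plan. Finally, your surjectivity argument for the case $\sigma_i=\sigma_{i+1}+1$ (``no entry lies strictly between $\sigma_{i+1}$ and $\sigma_i$, which removes the only obstruction'') is an unsupported assertion; the paper instead deduces bijectivity by combining the injection with the equality of generating functions coming from the Demazure-operator identity in \cref{prop:bijExistence}, which supplies the cardinality count you would otherwise have to prove by hand.
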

The first part of this proposition together with \cref{prop:bijExistence} implies that $\phi$
is a bijection whenever $\sigma_i=\sigma_{i+1}+1$.

The proof of \cref{prop:colSetPresInj} is broken into two major parts: 
we first construct $\phi$ for fillings with two rows in \cref{lem:tworowCase},
and then show that the result is compatible with the remainder of the filling
in \cref{lem:fillingcompatible}.

\begin{lemma}\label{lem:tworowCase}
Let $\CoInvFree(\alpha,\sigma)$ be the set of coinversion-free two-row
fillings with shape $\alpha=(\alpha_1,\alpha_2)$ and basement $\sigma=(\sigma_1, \sigma_2)$
with $\sigma_1 >\sigma_{2}$ and $\alpha_1 > \alpha_{2}$. 
Then there is a column-set and major-index preserving injection $\phi$
\[
 \phi : \CoInvFree(\alpha,\sigma) \hookrightarrow \CoInvFree(s_1 \alpha,\sigma) \sqcup \CoInvFree(s_1 \alpha, s_1\sigma).
\]
\end{lemma}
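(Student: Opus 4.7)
My plan is to construct $\phi$ as a column-by-column procedure. Any filling $G$ of shape $(\alpha_2,\alpha_1)$ with basement in $\{(\sigma_1,\sigma_2),(\sigma_2,\sigma_1)\}$ sharing the column multisets of $F$ is specified by a Boolean vector $\svec=(s_0,s_1,\dots,s_{\alpha_2})\in\{0,1\}^{\alpha_2+1}$, where $s_0$ encodes whether the new basement is $(\sigma_2,\sigma_1)$ and, for $j\ge 1$, $s_j$ records whether the two entries in column $j$ of $F$ are exchanged between rows in the image. The entries of columns $j>\alpha_2$ are forced: the single non-basement entry $F(1,j)$ must land in row $2$ of $G$, since new row $1$ has only $\alpha_2$ non-basement cells. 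The problem thus reduces to exhibiting a unique $\svec=\svec(F)$ for which the resulting $G$ is non-attacking, coinversion-free, and satisfies $\maj G=\maj F$.

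The key observation is that the source shape $(\alpha_1,\alpha_2)$ admits only type-$A$ triples while the target shape $(\alpha_2,\alpha_1)$ admits only type-$B$ triples, and that both the non-attacking condition and the triple condition are local to a pair of adjacent columns; the descent contributions to $\maj$ are also local. I would therefore translate each of the three requirements on $G$ into a Boolean predicate on $(s_{j-1},s_j)$ together with the cell values in columns $j-1$ and $j$. Major-index preservation couples nontrivially to the swap decisions because a row-$1$ descent in $G$ contributes $\alpha_2-j+1$ to $\maj$ while a row-$2$ descent contributes $\alpha_1-j+1$, so moving a descent between rows changes the weight by $\alpha_1-\alpha_2$. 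Using the hypothesis that $F$ has no type-$A$ coinversion at $(j-1,j)$, a case analysis should show that for every admissible $s_{j-1}$ there is exactly one $s_j$ satisfying all three predicates, yielding a deterministic left-to-right propagation rule; the initial value $s_0$ is pinned down by the constraint coming from the basement and the first column via $\sigma_1>\sigma_2$, and the extra type-$B$ triple at columns $(\alpha_2,\alpha_2+1)$ --- which has no type-$A$ counterpart in the source --- serves as a consistency check at the right boundary, automatically satisfied once the propagation reaches it.

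Injectivity of $\phi$ then follows from column-set preservation: two source fillings with different column multisets produce images with different column multisets, and two source fillings with the same column multisets but different row arrangements produce images that also differ, because the same local rule read off from the image recovers $\svec(F)$ uniquely. The main obstacle is the case analysis in the second step. One must verify, across all configurations of entries in two adjacent columns --- including ties broken by the convention in \eqref{eq:invTriplets} and the basement inequality $\sigma_1>\sigma_2$ --- that the three predicates conspire to leave precisely one legal value of $s_j$ for each $s_{j-1}$. Each configuration is elementary, but the table is large; this is the same sort of finite verification that the authors note is partly delegated to a computer in the subsequent multi-row compatibility lemma.
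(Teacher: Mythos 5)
There is a genuine gap at the heart of your propagation scheme, and it has two parts. First, the direction of propagation is backwards. In the target shape $(\alpha_2,\alpha_1)$ the only triples are of type $B$, and such a triple at columns $(j-1,j)$ involves \emph{both} cells of column $j-1$ but only the \emph{bottom} cell of column $j$. Hence the coinversion condition deterministically forces the arrangement of column $j-1$ from the bottom entry of column $j$ (exactly one placement of the two distinct entries $\{A,B\}$ avoids a coinversion with a fixed $C$ to their right), so the forced propagation runs right to left, seeded by the tail entries that you correctly observe must land in the long row. Read left to right as you propose, the coinversion predicate on $(s_{j-1},s_j)$ can be satisfied by zero, one, or two values of $s_j$, so the claim that ``for every admissible $s_{j-1}$ there is exactly one $s_j$'' is not delivered by that predicate; the paper's filling rule is precisely the right-to-left version of your idea, and the triple at columns $(\alpha_2,\alpha_2+1)$ is its \emph{starting point}, not a consistency check that is ``automatically satisfied.''

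The more serious problem is the attempt to restore uniqueness by adding a \emph{local} major-index predicate. Preservation of $\maj$ is not expressible as a conjunction of conditions on adjacent column pairs: the unique coinversion-free, column-set-preserving image $F'$ (forced as above) does \emph{not} in general preserve the descent contribution at each column boundary. In the paper's Subcase 4b, for example, a descent $a_k<c_1$ in the long row of $F$ at the end of the short row disappears entirely in $F'$, and is compensated by a descent $b_j<b_{j+1}$ migrating from the short row to the long row at a boundary $j$ that may lie far to the left; only the sums of the leg lengths agree. Consequently, if you impose local equality of descent contributions as one of your predicates, the system has \emph{no} solution for such $F$, while dropping that predicate loses uniqueness of $s_j$. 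This non-locality is exactly why the paper, after defining the forced rule, must prove $\maj F=\maj F'$ by strong induction on $\alpha_2$ with a long case analysis tracking chains of inequalities across many columns; that global argument is the real content of the lemma and is absent from your proposal. (Your reduction of injectivity to column-set preservation is fine, since a two-row coinversion-free filling of shape $(\alpha_1,\alpha_2)$ is itself determined by its column sets via the analogous type-$A$ forcing.)
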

\begin{proof}
We define a \emph{filling rule} to transform a filling $F$
in $\CoInvFree(\alpha,\sigma)$ to some $F' \in \CoInvFree(s_1 \alpha,\sigma) \sqcup \CoInvFree(s_1 \alpha, s_1\sigma)$:
Start at the end of the first row of $F$ and map that entry to the bottom row (and corresponding column) in $F'$.
Label this entry $C$ and consider the adjacent column to the left.

If this adjacent column in $F$ has one entry, map that entry to the bottom row of $F'$ and let $C$ now denote this entry.

Otherwise, this column has entries $\{A,B\}$. If either entry is greater than $C$, choose the least 
element greater than $C$ to be in the bottom row. If neither entry is greater than $C$, 
choose the smallest entry to be in the bottom row. Repeat this procedure by letting $C$ be the bottom 
entry in $F'$ in the column to the right of the column being processed. 
It suffices to show that the resulting $F'$ is both coinversion-free and that it has the same major index as $F$.
\medskip 

The first part is easy --- note that $A \neq B$ since otherwise, $F$ would contain a coinversion.
Then exactly one of the arrangements of $A$ and $B$ can produce a coinversion together with $C$,
and it is straightforward to verify that the filling rule gives the coinversion-free choice.

%

The main difficulty therefore is demonstrating that the filling rule is major index preserving.
We proceed by strong induction on the length of the shortest row $\alpha_2$
to demonstrate that the map is major index preserving.
For the remainder of the proof, let $<$ indicates the presence of a 
descent between two entries while $\ge$ indicates the lack of such a descent.
For the base case, suppose that the shorter row has length $0$.

Suppose $F$ has the form
\begin{align*}
&a_1~c_1~c_2~\cdots~c_m\\
&b_1
\end{align*} 
where $a_1$ and $b_1$ are the basement entries.
There are two separate cases to consider.

\noindent 
\textbf{Base case a: $c_1>a_1$.} Since $a_1>b_1$ it follows that $F'$ has the form
\begin{align*}
&a_1\\
&b_1 < c_1~c_2~\cdots~c_m
\end{align*}
and therefore both the initial and final fillings have a descent between the 
first and second columns and the major index is preserved. 

\noindent
\textbf{Base case b: $c_1\le a_1$.} Then $F'$ is of the form
\begin{align*}
\begin{matrix}
 a_1 \\
 b_1 & \geq & c_1 & c_2 & \dotsc & c_m
\end{matrix}
\qquad 
\text{ or }
\qquad 
\begin{matrix}
 b_1 \\
 a_1 & \geq & c_1 & c_2 & \dotsc & c_m
\end{matrix}
\end{align*}
and in either case, there is no descent between the first and second column.
Hence, the major index is preserved and we have proved the base case.
\bigskip

For all the general cases below, we use filling
\begin{align*}
F \quad = \quad
\begin{matrix}
 a_1 & a_2 & \dotsc & a_k & c_1 & \cdots & c_m \\
 b_1 & b_2 & \dotsc & b_k
\end{matrix}
\end{align*}
to represent an $F\in \CoInvFree(\alpha,\sigma)$
where $a_1$, $b_1$ is the basement with $a_1>b_1$. As before, $F'$ is the filling obtained from $F$
via the filling rule. We proceed by casework based on how the filling rule was applied.
\medskip 

\emph{The main outline for this casework is as follows:}
There are a few cases to consider depending on if there is some (smallest) index $\ell$, $1<\ell \leq k$
such that $a_\ell>b_\ell$, and if so, how this column appears in $F'$.
Note that we can use the induction hypothesis on all descents to the right of such a column,
and conclude that these contribute the same amount to the major index in the two fillings.
\medskip

\noindent
\textbf{Case 1:} There exist some $\ell \geq 3$, such that $a_\ell>b_\ell$,
$a_i<b_i$ for $2\le i\le \ell-1$ and $a_\ell$ appears on top of $b_\ell$ in $F'$.
Remember, $F$ is of the form 
\begin{align*}
&a_1~\cdots~a_\ell~\cdots~a_k ~c_1~c_2~\cdots~c_m\\
&b_1~\cdots~b_\ell~\cdots~b_k 
\end{align*}
By the inductive hypothesis, we have that the major index is preserved after the $\ell^{th}$ column.\footnote{Note that we are essentially treating the $\ell^{th}$ column as a basement in order to invoke the inductive hypothesis. This is done throughout the proof when invoking the inductive hypothesis.}

\noindent
\textbf{Subcase 1a:} Suppose $b_{\ell-1}<b_\ell$.
It follows that $a_{\ell-1}<b_{\ell-1}<b_{\ell}<a_{\ell}$. 
Since $F$ has no coinversions it follows 
that $b_{i+1}>a_i\ge a_{i+1}$ for $2\le i\le \ell-2$ and since $b_{i}>a_{i}$ over the same 
indices it follows that $b_{i}>a_i\ge a_{i+1}$. 
Therefore --- using the filling rule --- it follows that $F'$ is of the form
\begin{align*}
&\sigma(a_1,b_1)~b_2~\cdots~b_{\ell-1}< a_\ell~\cdots\\
&\sigma(a_1,b_1)~a_2~\cdots~a_{\ell-1}< b_\ell~\cdots~c_1~\cdots~c_m
\end{align*}
and the major index is clearly preserved from the second column onwards. 
To prove that the entries in the first and second column preserves major index,
there are two subsubcases to consider. If $b_1\ge a_2$, then $b_2>a_1>b_1\ge a_2$ since $F$ 
has no coinversions. Thus, 
\begin{align*}
F = 
\begin{matrix}
 a_1 & \geq & a_2 & \dotsc \\
 b_1 & < & b_2 & \dotsc
\end{matrix}
\qquad
\text{and}
\qquad
F' = 
\begin{matrix}
 a_1 & < & b_2 & \dotsc \\
 b_1 & \geq & a_2 & \dotsc
\end{matrix}
\end{align*}
Since the descent in both fillings is in the shortest row, it 
is clear that in both cases it has the same contributes to the major index.
\medskip

In the second subsubcase, we have $a_2>b_1$, and since $F$ has no coinversions this implies that $b_2>a_1\ge a_2>b_1$. 
Now we have
\begin{align*}
F = 
\begin{matrix}
 a_1 & \geq & a_2 & \dotsc \\
 b_1 & < & b_2 & \dotsc
\end{matrix}
\qquad
\text{and}
\qquad
F' = 
\begin{matrix}
 b_1 & < & b_2 & \dotsc \\
 a_1 & \geq & a_2 & \dotsc
\end{matrix}
\end{align*}
and major index is preserved again.

\noindent
\textbf{Subcase 1b:} Otherwise suppose $b_{\ell-1}\ge b_{\ell}$. 

\noindent
\textbf{Subcase 1b.i:} If $a_{\ell-1}\ge b_{\ell}$ it follows 
that $b_{\ell-1}>a_{\ell-1}\ge b_{\ell}$. Since $F$ is coinversion-free, 
it follows that $b_{\ell}<a_{\ell}\le a_{\ell-1}<b_{\ell-1}$. 
Since $F$ is coinversion-free it also implies that $b_{i+1}>a_i\ge a_{i+1}$ for $2\le i\le \ell-2$.
Furthermore, since $b_i>a_i$ over the same indices, 
it follows that $b_i>a_i\ge a_{i+1}$ and $F'$ is of the form
\begin{align*}
&\sigma(a_1,b_1)~b_2~\cdots~b_{\ell-1}< a_\ell~\cdots\\
&\sigma(a_1,b_1)~a_2~\cdots~a_{\ell-1}< b_\ell~\cdots~c_1~c_2~\cdots~c_m.
\end{align*}
Using the same treatment as in Subcase 1a, it follows that $\maj F = \maj F'$.

\noindent
\textbf{Subcase 1b.ii:} Otherwise, $a_{\ell}>b_{\ell}>a_{\ell-1}$ and $b_{\ell-1}\ge b_{\ell}$. 

\begin{itemize}
 \item 
If $b_{\ell-1}>b_{\ell-2}$, it follows that $b_{\ell-1}>b_{\ell-2}>a_{\ell-2}\ge a_{\ell-1}$
and $F$ has the form
\begin{align*}
&\cdots~a_{\ell-2}\ge a_{\ell-1}< a_\ell~\cdots~c_1~c_2~\cdots~c_m\\
&\cdots~b_{\ell-2}< b_{\ell-1}\ge b_\ell~\cdots
\end{align*}
while $F'$ is of the form
\begin{align*}
&\cdots~b_{\ell-2}\ge a_{\ell-1}< a_\ell~\cdots\\
&\cdots~a_{\ell-2}< b_{\ell-1}\ge b_\ell~\cdots~c_1~c_2~\cdots~c_m.
\end{align*}
The major index is preserved before the $(\ell-2)^{nd}$ column by the reasoning in Subcase 1a.
Hence, the major index is preserved on the entire filling and statement follows.

\item 
Otherwise, $b_{\ell-1}\le b_{\ell-2}$. 
Since $F$ is coinversion-free, it follows that $a_{\ell-1}\le a_{\ell-2}<b_{\ell-1}$.
In this case, both $F$ and $F'$ have two rows which are of the form
\begin{align*}
&\cdots~a_{\ell-2}\ge a_{\ell-1}< a_\ell~\cdots\\
&\cdots~b_{\ell-2}\ge b_{\ell-1}\ge b_\ell~\cdots
\end{align*}
Continuing left in such a manner, we either reach an index 
such that $b_{j-1}<b_j$ for $3\le j\le {\ell-1}$ or there is no such index. 
If $b_{j-1}<b_j$ is the greatest such $j$, it follows using logic identical 
to above that $F$ has the form 
\begin{align*}
&\cdots a_{j-1}\ge a_j\ge a_{j+1}~\cdots~\ge a_{\ell-1}< a_\ell~\cdots~c_1~c_2~\cdots~c_m\\
&\cdots b_{j-1} < b_j\ge b_{j+1}~\cdots~\ge b_{\ell-1}\ge b_\ell~\cdots
\end{align*}
while $F'$ has the form 
\begin{align*}
&\cdots b_{j-1}\ge a_j\ge a_{j+1}~\cdots~\ge a_{\ell-1}< a_\ell~\cdots\\
&\cdots a_{j-1} <  b_j\ge b_{j+1}~\cdots~\ge b_{\ell-1}\ge b_\ell~\cdots~c_1~c_2~\cdots~c_m
\end{align*}
and the major index is preserved from column $j$ through column $\ell$. 
The remaining columns have major index preserved due to Subcase 1a, and thus major index is preserved overall. 
If there is no such $j$, note that $b_2>a_1\ge a_2$ since $F$ is coinversion-free
with $a_1>b_1$. Therefore, $F$ is of the form
\begin{align*}
& a_1\ge a_{2}~\cdots~\ge a_{\ell-1}< a_\ell~\cdots~c_1~c_2~\cdots~c_m\\
&b_1< b_{2}~\cdots~\ge b_{\ell-1}\ge b_\ell~\cdots
\end{align*}
while $F'$ is of the form
\begin{align*}
& a_1\ge a_{2}~\cdots~\ge a_{\ell-1}< a_\ell~\cdots\\
&b_1< b_{2}~\cdots~\ge b_{\ell-1}\ge b_\ell~\cdots~c_1~c_2~\cdots~c_m
\end{align*}
and major index is preserved as the sum of leg lengths are the same.
\end{itemize}

\bigskip 

\noindent
\textbf{Case 2:} There exist some $\ell \geq 3$, such that $a_\ell>b_\ell$,
$a_i<b_i$ for $2\le i\le \ell-1$ and $a_\ell$ appears below $b_\ell$ in $F'$.

\noindent
\textbf{Subcase 2a:} If $a_{\ell-1}\ge a_\ell$ then it follows that $a_{\ell-1}\ge a_{\ell}>b_\ell$. 
Therefore, $F'$ is of the form
\begin{align*}
&\cdots b_{\ell-1}~b_\ell~\cdots~a_k\\
&\cdots a_{\ell-1}~a_\ell~\cdots~b_k~c_1~c_2~\cdots~c_m
\end{align*}
and the major index is preserved using Subcase 1a. 

\noindent
\textbf{Subcase 2b:} Otherwise $a_{\ell}> a_{\ell-1}$.

\noindent
\textbf{Subcase 2b.i:} Suppose that $a_{\ell}>b_{\ell-1}$. 
Then note that $a_{\ell}>b_{\ell-1}>a_{\ell-1}$ and $F$ of the form 
\begin{align*}
&\cdots a_{\ell-1}~a_\ell~\cdots~a_k~c_1~c_2~\cdots~c_m\\
&\cdots b_{\ell-1}~b_\ell~\cdots~b_k
\end{align*}
while $F'$ is of the form 
\begin{align*}
&\cdots b_{\ell-1}~b_\ell~\cdots~a_k\\
&\cdots a_{\ell-1}~a_\ell~\cdots~b_k~c_1~c_2~\cdots~c_m
\end{align*}
and thus the major index is preserved using the logic of Subcase 1a.

\noindent
\textbf{Subcase 2b.ii:} Otherwise, it follows that $b_{\ell-1}\ge a_{\ell}>b_{\ell}$. 
Since $F$ is coinversion-free, it follows that $a_{\ell-1}<b_{\ell}<a_{\ell}$ 
and $F$ is of the form
\begin{align*}
&\cdots a_{\ell-1}< a_\ell~\cdots~a_k~c_1~c_2~\cdots~c_m\\
&\cdots b_{\ell-1}\ge b_\ell~\cdots~b_k
\end{align*}
which yields an $F'$ of the form 
\begin{align*}
&\cdots a_{\ell-1}< b_\ell~\cdots~a_k\\
&\cdots b_{\ell-1}\ge a_\ell~\cdots~b_k~c_1~c_2~\cdots~c_m.
\end{align*}
The major index is preserved in this case by following Subcase 1b.ii.

\noindent
\textbf{Case 3:} Case $\ell=2$. This means $a_2>b_2$.
By inductive hypothesis, the major index is preserved from the second column onward.
We only need to verify that major index is preserved among descents between first and second column.

\noindent
\textbf{Subcase 3a:} Suppose that $b_2>a_1$ then it follows that $a_2>b_2>a_1>b_1$. 
In this case, $F'$ is of the form
\begin{align*}
\begin{matrix}
 a_1 & < & a_2 & \dotsc \\
 b_1 & < & b_2 & \dotsc 
\end{matrix}
\qquad
\text{ or }
\qquad
\begin{matrix}
 a_1 & < & b_2 & \dotsc \\
 b_1 & < & a_2 & \dotsc 
\end{matrix}
\end{align*}
and
\begin{align*}
F\quad = \quad 
\begin{matrix}
 a_1 & < & a_2 & \dotsc \\
 b_1 & < & b_2 & \dotsc 
\end{matrix}
\end{align*}
so the major index is preserved, since the descents between first and second columns appear in both fillings.

\noindent
\textbf{Subcase 3b:} Otherwise $a_1\ge b_2$ and then there are two possible cases.

\noindent
\textbf{Subcase 3b.i:} $a_2$ appear below $b_2$ in $F'$:
\begin{itemize}
\item Case $b_1\ge a_2$: It follows that we have the indicated non-descents:
\begin{align*}
F=
\begin{matrix}
 a_1 & \geq & a_2 & \dotsc  \\
 b_1 & \geq & b_2 & \dotsc  
\end{matrix}
\quad
\text{ and }
\quad
F' = 
\begin{matrix}
 a_1 & \geq & b_2 & \dotsc  \\
 b_1 & \geq & a_2 & \dotsc 
\end{matrix}
\end{align*}
and thus the major index is preserved.
 
\item Case $b_1<a_2$: With $F$ being coinversion-free,  we have $a_1\ge a_2>b_2$, so
\begin{align*}
F=
\begin{matrix}
 a_1 & \geq & a_2 & \dotsc \\
 b_1 & \ast & b_2 & \dotsc 
\end{matrix}
\quad
\text{ and }
\quad
F' = 
\begin{matrix}
 b_1 & \ast & b_2 & \dotsc \\
 a_1 & \geq & a_2 & \dotsc 
\end{matrix}
\end{align*}
where $\ast$ contributes in the same way to both fillings if it is a descent.
\end{itemize}

\noindent
\textbf{Subcase 3b.ii:} $a_2$ appear above $b_2$ in $F'$: 
There are again two possible cases.
\begin{itemize}
 \item Case $b_1\ge b_2$: This condition together with $F$ being coinversion-free implies that 
$b_2<a_2\le a_1$ and we must have
\begin{align*}
F=
\begin{matrix}
 a_1 & \geq & a_2 & \dotsc  \\
 b_1 & \geq & b_2 & \dotsc
\end{matrix}
\quad
\text{ and }
\quad
F' = 
\begin{matrix}
 a_1 & \geq & a_2 & \dotsc  \\
 b_1 & \geq & b_2 & \dotsc 
\end{matrix}
\end{align*}
\item Case $b_1<b_2$: This gives $a_1\ge a_2>b_2>b_1$ and 
it follows that
\begin{align*}
F=
\begin{matrix}
 a_1 & \geq & a_2 & \dotsc  \\
 b_1 &  < & b_2 & \dotsc
\end{matrix}
\quad
\text{ and }
\quad
F' = 
\begin{matrix}
 a_1 &  < & a_2 & \dotsc  \\
 b_1 & \geq & b_2 & \dotsc 
\end{matrix}
\end{align*}
\end{itemize}
In both these subcases, major index is preserved.

\noindent
\textbf{Case 4:} The last case is occurs whenever $a_1>b_1$ and $a_i<b_i$ for $2\le i\le k$.

\noindent
\textbf{Subcase 4a:} If $c_1\le a_{k}<b_{k}$ or $a_{k}<b_{k}\le c_1$ then $F'$ is
\begin{align*}
&\cdots~b_2~\cdots b_{k}\\
&\cdots~a_2~\cdots a_{k}~c_1~c_2~\cdots~c_m\\
\end{align*}
and using the logic in Subcase 1a the result follows.

\noindent
\textbf{Subcase 4b:} Otherwise $a_{k}<c_1\le b_{k}$. Then we have
\begin{align*}
F\quad=\quad
\begin{matrix}
 \dotsc & a_k & < & c_1 & \dotsc \\
 \dotsc & b_k
\end{matrix}
\quad
\text{ and }
\quad
F'\quad=\quad
\begin{matrix}
 \dotsc &  a_k \\
 \dotsc &  b_k & \geq &  c_1 & \dotsc 
\end{matrix}
\end{align*}
Using the reasoning from Subcase 1b.ii, 
there exists a $j \geq 1$ such that 
\begin{align*}
F\quad=\quad
\begin{matrix}
 \dotsc & a_j & \geq & a_{j+1}& \geq & \dotsc & a_k & < &c_1 & \dotsc \\
 \dotsc & b_j &  < & b_{j+1}& \geq & \dotsc & b_k 
\end{matrix}
\end{align*}
and
\begin{align*}
F'\quad=\quad
\begin{matrix}
 \dotsc & a_j & \geq & a_{j+1}& \geq & \dotsc & a_k \\
 \dotsc & b_j &  < & b_{j+1}& \geq & \dotsc & b_k  & \geq &c_1 & \dotsc 
\end{matrix}
\end{align*}
The two marked descents in $F$ has leg lengths that sum to
the leg length of the single marked descent in $F'$.
Furthermore the major index is preserved before the $j^{th}$ using the logic of Subcase 1a and the therefore the major index is preserved overall.


%
\medskip 

All cases have now been covered, and this concludes the proof.
\end{proof}
The remainder of the proof is the verification that this two row filling rule is compatible 
with the remainder of the filling ---
that is, applying the filling rule on two rows in a larger filling,
no coinversions are introduced.

Since the presence of a coinversion is a local condition,
we can reduce the proof to a finite list of possible configurations.
We verify these via computer verification and the procedure
is described in the detail in the following proof.

\begin{lemma}\label{lem:fillingcompatible}
The filling rule in \cref{lem:tworowCase} produces 
fillings that are compatible with entries in a larger filling, \emph{i.e.},
no coinversions are introduced.
\end{lemma}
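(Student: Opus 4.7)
The plan is to exploit the locality of coinversions. A coinversion triple involves only three boxes, arranged in two adjacent columns and at most two rows, and the filling rule from \cref{lem:tworowCase} touches only the two processed rows $i$ and $i+1$: their entries may be rearranged within columns, their lengths swap, but every other row is left unchanged both in entries and in length. Triples lying entirely outside rows $i, i+1$ are therefore literally identical in $F$ and $F'$, while triples lying entirely inside rows $i, i+1$ are controlled by \cref{lem:tworowCase}. What remains are the \emph{boundary} triples that have at least one box in $\{i, i+1\}$ and at least one box in a row outside this pair.

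To enumerate the boundary triples, recall that a triple of type $A$ has $a, b$ in one row and $c$ in the column of $b$ somewhere below, while a triple of type $B$ has $a, b$ in one row and $c$ in the column of $a$ somewhere above; in either case the triple spans at most two rows. A boundary triple therefore fits into finitely many templates indexed by which of rows $i, i+1$ the triple meets, whether the outside row lies above or below, and whether the triple is of type $A$ or type $B$. For each template the data that decides coinversion status in $F'$ is limited: the entries in the at most two columns of rows $i, i+1$ that the triple touches (used together with column-set preservation to read off whatever swap the filling rule performed there), the entry in the outside box, and the relevant row-length inequalities. Parameterizing by the order type on this handful of entries produces a finite list of local cases to check.

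The main obstacle is the combinatorial explosion of these cases, worsened by the fact that the action of the filling rule in a given column depends on the $C$-value inherited from columns further to the right, so whether a swap occurs in a given column cannot be decided from strictly local data and must be treated as a free parameter. My approach is to implement the filling rule together with the type $A$ and type $B$ orientation tests in a computer algebra system and run an exhaustive check over all order types of the small local patch (the outside entry, the two columns of rows $i, i+1$ meeting the triple, and both possibilities for how the rule acts in each of those columns), confirming in every case that no new coinversion is created. The detailed setup of this verification is given in the proof below.
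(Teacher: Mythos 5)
Your reduction to local $2\times 3$ patches spanning the two processed rows and one fixed outside row is exactly the paper's strategy, and the observation that the swap decision in a given column is driven by the $C$-value inherited from the right is the correct diagnosis of the difficulty. However, your proposed resolution --- treating the swap/no-swap choice in each column as a free parameter and exhaustively verifying that \emph{both} branches create no coinversion --- does not go through. The paper's own computer search shows that this verification fails: there exist coinversion-free local configurations (characterized by a ``down-increasing'' condition on the outside column, see \cref{clm:badCaseFix}) for which exactly \emph{one} of the two possible images of the second column is coinversion-free with respect to the fixed row. Your exhaustive check would hit these cases, report a coinversion in the bad branch, and leave you with no conclusion.

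What is missing is an argument that the filling rule, driven by its non-local right-to-left scan, never actually selects the bad branch in those configurations. The paper supplies this via \cref{clm:badCaseFix}: assuming the rule did \emph{not} flip the problematic column, the down-increasing condition is shown (again by finite check) to propagate as an invariant to every column further right, all the way to the end of the shorter processed row, where a final exhaustive check shows that $F$ and $F'$ cannot both be coinversion-free --- a contradiction forcing the flip. Without some such non-local invariant-propagation step, the purely local case analysis you describe cannot close the argument.
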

\begin{proof}
Since a coinversion only depends on elements in two separate rows,
it suffices to verify the algorithm for augmented fillings with 
precisely three rows --- the two rows affected by the filling rule, and a fixed third row.
We may then assume that these three rows are adjacent.
\medskip 

The general strategy is to consider a $2\times 3$ sub-grid of the diagram in $F$,
and then conclude that there are no coinversions in $F'$.
Furthermore, it suffices to restrict the entries between $1$ and $6$,
since only the relative order among the entries determines the presence of coinversions.
Let the two by three grid be as below
\begin{align*}
\begin{matrix}
a & d \\
b & e \\
c & f
\end{matrix}.
\end{align*}
Since inversions depend on the relative order of the row lengths,
we need to take that into consideration, by assigning each row a \emph{rank},
a number between $1$ and $3$, indicating the relative order of the row lengths.
In case two rows have the same length, the bottom row is assigned a lower rank.
The rank together with the up to six entries in the grid
allow us to determine which triples in the grid that are coinversions,
and we only consider such possible grids in $F$ which are 
coinversion-free.
\medskip

Some of the entries in the grid might not be present in the case we are examining 
the far right of a filling.
However, it is straightforward to see that there is no loss of generality 
to assume that $a$, $b$ and $c$ are always present, as the other cases
can easily be verified by hand.

The filling rule processes the two rows being swapped from right to left,
and we need to verify that there is no coinversion in $F'$ being produced involving 
the third fixed row.
Note that there might locally be \emph{two possible images}, $F'$ and $F''$
given a local grid $F$, depending on what the filling rule implied in the second column
--- in this example, the top two rows are swapped:
\begin{align}\label{eq:exampleSwapCases}
F =
\begin{matrix}
3: & a & d \\
1: & b & e \\
2: & c & f
\end{matrix}
\qquad
F' =
\begin{matrix}
1: & \ast & d \\
3: & \ast & e \\
2: & c & f
\end{matrix}
\qquad 
F'' =
\begin{matrix}
1: & \ast & e \\
3: & \ast & d \\
2: & c & f
\end{matrix}
\end{align}
The entries marked $\ast$ are permutations of $a$ and $b$, and the positions 
are determined by the filling rule. By construction, there are no coinversions involving only the rows being swapped,
so it suffices to check coinversions involving the third, fixed row.
\bigskip 

There are exactly three things that can occur locally in the grid.
We verify this using the computer.

\noindent
\textbf{The diagram is degenerate.}
One of the rows being swapped has an element missing in the grid.
If this is the case, the position of the grid entries in $F'$ are uniquely determined
by the filling rule and there is only one case in \eqref{eq:exampleSwapCases}.
By checking all such local cases, we see that all corresponding grids are coinversion-free.

\noindent
\textbf{Both possibilities are valid.}
\item The non-fixed entries in the second column can either be swapped or not by the filling rule,
and \emph{both} these possibilities yield a coinversion-free grid using the filling rule.
This is verified by computer. As an example of this situation, we might have
\[
F =
\begin{matrix}
3: & 4 & 2 \\
1: & 1 & 1 \\
2: & 6 & 5
\end{matrix}
\qquad
F' =
\begin{matrix}
1: & 4 & 2 \\
3: & 1 & 1 \\
2: & 6 & 5
\end{matrix}
\qquad 
F'' =
\begin{matrix}
1: & {1} & 1 \\
3: & {4} & {2} \\
2: & {6} & 5
\end{matrix}
\]
and in either case, the filling rule produces valid (coinversion-free) grid.

\noindent
\textbf{Only one of the grids is coinversion-free.}

This situation requires a more careful analysis,
and we need to do a non-local analysis to prove that $F'$ is indeed of the form
that produce a coinversion-free filling.

Computer check verifies that the event that only one of the two grids are valid
occurs only under the conditions in the following claim, which then determines that we are in the 
case that produces a valid grid:
\begin{claim}\label{clm:badCaseFix}
Suppose we swap the longest and shortest row in the $2\times 3$ grid, as in
\begin{align}\label{eq:23diagramcases}
\begin{matrix}
{\text{swap}\left.\rule{0pt}{12pt}\right\{ \; }\\
\phantom{a} 
\end{matrix}
\begin{matrix}
3: & a & d \\
1: & b & e \\
2: & c & f
\end{matrix}
\qquad 
\text{ or }
\qquad
\begin{matrix}
\phantom{a} \\
{\text{swap}\left.\rule{0pt}{12pt}\right\{ \; }
\end{matrix}
\begin{matrix}
2: & a & d \\
3: & b & e \\
1: & c & f
\end{matrix}
\end{align}
and one of $e>d>f$ or $d>f>e$ or $f>e>d$ hold (down-increasing condition).
Then the corresponding grid in $F'$ \emph{must} be of the respective forms
\begin{align}\label{eq:23diagramcases2}
\begin{matrix}
1: & \ast & e \\
3: & \ast & d \\
2: & c & f
\end{matrix}
\qquad 
\text{ and }
\qquad
\begin{matrix}
2: & a & d \\
1: & \ast & f \\
3: & \ast & e
\end{matrix}
\end{align}
\end{claim}
\begin{proof}
Suppose that the entries in the second column of $F'$ are not as in \cref{eq:23diagramcases2},
that is, we assume they did not ``flip''.

If the adjacent column to the right of the second column in $F$ also has all three entries present, 
it follows (via computer verification) that these entries
also have the down-increasing condition.
This third column in $F$ then also appears identically in $F'$.
The down-increasing condition is therefore an invariant, present in all further columns to the right,
via induction.

Eventually, we reach the end one of the shortest row, where last complete column satisfies
the down-increasing condition and is identical in both $F$ and $F'$.
Finally, an exhaustive search on the computer shows that 
it is impossible for $F$ and $F'$ to be of these specified forms and simultaneously be coinversion-free, a contradiction.

\medskip


Hence, the entries in $F'$ must be arranged as in \cref{eq:23diagramcases2}.
Computer verification on the local situation in \cref{eq:23diagramcases2} verifies that 
the filling rule produces no coinversions.
\end{proof}

To conclude the proof, here is a summary of properties that needs to be verified via computer:
\begin{itemize}
 \item Only the three local cases listed above appear among all possible $2\times 3$-grids;
 degenerate, two valid possibilities, one valid possibility of specified form.
 \item The filling rule in \cref{clm:badCaseFix} preserve the down-increasing condition.

 \item Preserving the down-increasing condition eventually leads to a contradiction --- that is,
  once we reach the end of the shortest row, $F$ and $F'$ cannot simultaneously be coinversion-free.
 This forces $F$ to be of the expected form.

\end{itemize}
We have done this in Mathematica and Java with two different implementations.
\end{proof}

\section{Another column-set preserving map}\label{sec:main2}

It is possible to mimic the above proposition in the case
of modified Hall--Littlewood polynomials --- fillings without inversions.

\begin{proposition}
Let $\InvFree(\alpha,\sigma)$ be the set of inversion-free fillings with 
shape $\alpha$ and big basement $\sigma$.
Suppose $\sigma_i >\sigma_{i+1}$ and $\alpha_i \ge \alpha_{i+1}$.
Then there is a bijection
\[
 \varphi : \InvFree( \alpha,\sigma) \longleftrightarrow \InvFree(s_i \alpha, s_i\sigma)
\]
with the property that $\varphi$ preserves column sets and major index. 
\end{proposition}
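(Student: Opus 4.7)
The plan is to mirror the two-step strategy of \cref{prop:colSetPresInj}: first construct an explicit filling rule in the two-row case, then verify by local analysis that this rule is compatible with a fixed ambient third row, and finally observe that uniqueness of the inversion-free arrangement upgrades the construction from an injection to a bijection.

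For the two-row case with shape $(\alpha_1, \alpha_2)$ satisfying $\alpha_1 \ge \alpha_2$, I would define a filling rule that processes columns from right to left, analogous to the one in \cref{lem:tworowCase} but selecting, in each column with entries $\{A, B\}$, the arrangement whose triple with the guide entry $C$ to its right is \emph{inversion-free} (clockwise orientation for type $A$, counter-clockwise for type $B$) rather than coinversion-free. Because this choice depends only on the unordered column set, the map is column-set preserving by construction, and the resulting filling lies in $\InvFree(s_i\alpha, s_i\sigma)$. The basement swap $s_i\sigma$ plays the role of the anchor guide at the leftmost column; since $\sigma_i > \sigma_{i+1}$ and both entries are ``infinities'', the rule treats them consistently regardless of any non-basement entries. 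The edge case $\alpha_i = \alpha_{i+1}$ is handled uniformly: the rule begins at the rightmost common column and the two equal-length rows are simply relabelled according to the output of the rule.

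Major-index preservation is proved by strong induction on $\alpha_{i+1}$, following the same case split as \cref{lem:tworowCase} but with the relevant descent inequalities interchanged (since the inversion-free choice is dual to the coinversion-free choice). Compatibility with a third ambient row is verified exactly as in \cref{lem:fillingcompatible}: an exhaustive check of $2\times 3$ sub-grids, with attached row ranks, shows that each falls into one of three regimes — degenerate, both local swap possibilities valid, or only one valid. The main obstacle, as before, lies in the third regime: one must identify a propagating orientation invariant analogous to the down-increasing condition of \cref{clm:badCaseFix} and verify by a global argument that its propagation all the way to the end of the shorter row contradicts inversion-freeness of either $F$ or $F'$. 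This reduces to a finite and routinely computer-checkable enumeration.

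Finally, to upgrade from injection to bijection, I would observe that the same filling rule, applied starting from any $F' \in \InvFree(s_i\alpha, s_i\sigma)$ with the roles of the two rows interchanged, produces a preimage in $\InvFree(\alpha, \sigma)$ with matching column sets. Uniqueness of the inversion-free arrangement compatible with a fixed column set and a fixed guide entry forces the forward and backward constructions to be mutual inverses, so $\varphi$ is the desired column-set and major-index preserving bijection.
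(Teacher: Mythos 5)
Your proposal follows essentially the same route as the paper: a right-to-left two-row filling rule making the locally inversion-free choice in each column, strong induction on the shorter row length for major-index preservation, and a computer-assisted check of ranked $2\times 3$ sub-grids with a propagating orientation invariant (the paper calls it the \emph{up-increasing condition}) for compatibility with an ambient row. The two points where you diverge are worth flagging. First, the paper dispatches the case $\alpha_i=\alpha_{i+1}$ trivially by swapping only the two basement entries and leaving the filling untouched, whereas your plan to ``begin at the rightmost common column'' leaves the initial guide entry $C$ undefined when the two rows have equal length; the paper's shortcut avoids this. Second, and more substantively, for surjectivity the paper does not construct an inverse at all: it cites Theorem 5.1.1 of Haglund--Haiman--Loehr at $t=0$ to conclude the two sets are equinumerous, so injectivity alone yields bijectivity. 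Your alternative --- running the rule in reverse from $F'$ and invoking the local uniqueness of the inversion-free arrangement --- is plausible but not free: the reverse rule operates under hypotheses ($(s_i\alpha)_i<(s_i\alpha)_{i+1}$, $(s_i\sigma)_i<(s_i\sigma)_{i+1}$) not covered by your forward construction, so it would need its own verification that it lands in $\InvFree(\alpha,\sigma)$ and is compatible with a larger filling, roughly doubling the casework. The equinumerosity argument buys you all of that for free; your explicit-inverse argument would buy a self-contained bijective proof not relying on the prior identity.
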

First note that the above proposition it trivial when $a_i=a_{i+1}$ as we can simply 
interchange the two basement entries and preserve the remainder of the filling, since this action
does not introduce inversions and it clearly preserves the major index.
In the case when $a_i>a_{i+1}$,
we proceed as before and first establish a bijection in the two row case,
followed by proving that this is compatible with a larger filling.

\medskip 

\begin{lemma}\label{lem:tworowCase2}
Let $\InvFree(\alpha,\sigma)$ be the set of inversion-free two-row fillings with 
shape $\alpha=(\alpha_1,\alpha_2)$ and big basement $\sigma=(\sigma_1,\sigma_2)$,
with $\sigma_1 >\sigma_{2}$ and $\alpha_1 > \alpha_{2}$.
Then there is a bijection $\varphi$
\[
 \varphi : \InvFree( \alpha,\sigma) \longleftrightarrow \InvFree(s_1 \alpha, s_1\sigma)
\]
with the property that $\varphi$ preserves column sets and major index.
\end{lemma}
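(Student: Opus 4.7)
The plan is to mirror the proof of \cref{lem:tworowCase} and define an explicit column-set-preserving rule $\varphi$ from $\InvFree(\alpha,\sigma)$ to $\InvFree(s_1\alpha, s_1\sigma)$ that processes the two rows of $F$ from right to left. Because the basement is \emph{big}---the two entries $\sigma_1 > \sigma_2$ are marked infinities exceeding every finite entry in $F$---swapping the shape to $s_1\alpha$ together with the basement to $s_1\sigma$ amounts to exchanging the two rows of $F$, basement included. The substantive decision for $\varphi$ is only to redistribute the two entries in each ``shared'' column between the now-swapped rows in such a way that no type $B$ inversion triple is created; the singleton entries in the ``extra'' columns of $F$ simply migrate from the top row of $F$ to the bottom row of $\varphi(F)$.

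I would construct $\varphi$ via a filling rule of the same form as in \cref{lem:tworowCase}, beginning at the rightmost cell of the longer row of $F$ and proceeding column-by-column to the left. Single-entry columns are transferred unchanged, and for each two-entry column $\{A,B\}$ one selects the arrangement of $\{A,B\}$ that avoids creating a type $B$ inversion with the entry $C$ most recently placed in the bottom row of $\varphi(F)$ to the right. Inversion-freeness and column-set preservation are both immediate from this local choice. Bijectivity is then obtained by observing that running the analogous procedure on the target side with the roles of the two rows reversed produces a two-sided inverse, since the swap of shape and basement is an involution.

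The main obstacle, as in \cref{lem:tworowCase}, is proving $\maj(F) = \maj(\varphi(F))$. I would argue this by strong induction on $\alpha_2$: the base case $\alpha_2 = 0$ amounts to verifying that the descent positions and leg lengths in the single non-basement row are unchanged by the swap, which is immediate since the basement entries dominate every finite entry and therefore contribute no descent at the leftmost data column. The inductive step splits into subcases indexed by the leftmost column $\ell$ at which the top--bottom comparison of entries changes sign, paralleling Cases 1--4 of \cref{lem:tworowCase}. In each subcase, the induction hypothesis handles contributions strictly to the right of column $\ell$, and the remaining contributions are tracked by inspecting the same short list of inequalities among $a_j, b_j, a_{j+1}, b_{j+1}$ used there; many subcases reduce to the observation that a descent moving from one row to the other carries the same leg length. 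The hardest part of execution is the sheer number of subcases, which will again be handled by a mirror-image of the mechanical casework already performed in the coinversion-free setting.
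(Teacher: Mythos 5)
Your proposal follows essentially the same route as the paper: the same right-to-left filling rule (transfer singletons, and in each two-entry column place the unique arrangement that avoids an inversion with the entry already placed to the right), followed by strong induction on $\alpha_2$ with casework mirroring \cref{lem:tworowCase}, so the substance matches. The one divergence is bijectivity: the paper deduces it from injectivity together with equinumerosity of the two sets (Theorem 5.1.1 of \cite{Haglund2005Macdonald} at $t=0$), whereas you assert that the mirror-image procedure is a two-sided inverse --- a plausible claim, but one that would itself require verification (that the reversed rule lands in $\InvFree(\alpha,\sigma)$ and composes to the identity) rather than being ``obtained by observing.''
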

\begin{proof}
We first consider the case when $\alpha_2=0$; in this case we can simply interchange
the rows and the result follows trivially. Otherwise for the remainder of the proof,
treat the column immediately after the basement to be the first column and note
the entries this column are in non-increasing order in order for the filling to have no inversion.

We now define a filling rule to transform a filling $F$ in $\InvFree(\alpha,\sigma)$
to a $F'\in \InvFree(s_1 \alpha,s_1\sigma)$: Start at the end of the first row
and map that entry into to the bottom row in $F'$.
Label this entry $C$ and consider the adjacent column to the left.

If this adjacent column in $F$ has one entry, map that entry to the bottom row
of $F'$ and let $C$ denote this entry.

Otherwise, the adjacent column in $F$ has entries $\{A,B\}$. If both entries are greater
than or equal to $C$ or less than $C$, map the larger entry to be in the bottom row
of $F'$. Otherwise map the lesser entry to be in the bottom row of $F'$.
Repeat this procedure with $C$ being the bottom entry in the adjacent column to
the right of the column being processed. It suffices to demonstrate that $F'$ is
inversion-free and that it has the same major index as $F$.
The first part is note that given a particular $\{A,B\}$ there is exactly one
arrangement of $A$ and $B$ that does not give a coinversion with $C$, and it
follows upon a straightforward verification that the
filling rule gives the inversion-free choice.

We now proceed by strong induction on the length of the shorter row $\alpha_2$ to
demonstrate that the filling rule is major index preserving. As in the previous
proof let $<$ indicate the presence of a descent between two entries while $\ge$
indicates the lack of a descent. For the base case, suppose that the shorter row
has length $1$. Suppose that $F$ has the form 
\begin{align*}
&a_1~c_1~c_2~\dotsc~c_m\\
&b_1
\end{align*}
where $a_1$ and $b_1$ are the first entries after the basement.
Then there are two separate cases to consider.

\noindent
\textbf{Base case a:} $a_1\geq  c_1$. Since $a_1\le b_1$ it follows that $F'$ has the form
\begin{align*}
&a_1\\
&b_1 \ge c_1~c_2~\dotsc~c_m\\
\end{align*}
and thus both $F$ and $F'$ lack a descent between the first and second columns and the major index is preserved.

\noindent
\textbf{Base case b:} $a_1< c_1$. The $F'$ is therefore
\begin{align*}
\begin{matrix}
 a_1 \\
 b_1 & < & c_1 & c_2 & \dotsc & c_m
\end{matrix}
\qquad 
\text{ or }
\qquad 
\begin{matrix}
 b_1 \\
 a_1 & < & c_1 & c_2 & \dotsc & c_m
\end{matrix}
\end{align*}
and in both cases there is a descent between the first and second column as in the original filling.
Hence the major index is preserved and the base case is proved.
\medskip

For all the cases below let original filling $F$ be represented by
\begin{align*}
F \quad = \quad
\begin{matrix}
 a_1 & a_2 & \dotsc & a_k & c_1 & \dotsc & c_m \\
 b_1 & b_2 & \dotsc & b_k
\end{matrix}
\end{align*}
were $a_1$ and $b_1$ are the first element \emph{after} the basement and $a_1\le b_1$.
As before, $F'$ is the filling obtained from the reverse filling rule.
We now proceed by casework on how the filling appears after the filling rule is used.

\noindent
\textbf{Case 1:} There exists $\ell\ge 3$ such that $a_\ell\le b_\ell$, $a_i>b_i$
for $2\le i\le \ell-1$ and $a_\ell$ appears on top on $b_\ell$ in $F'$.
By inductive hypothesis, the major index is preserved after the $\ell^{th}$ column.\footnote{This can be done as taking the original basement
and appending the filling as it appears at the $\ell^{th}$ column and after gives an inversion free filling. This logic is used whenever the inductive hypothesis is invoked.}

\noindent
\textbf{Subcase 1a:} Suppose that $b_{\ell-1}\ge b_\ell$.
It follows that $a_{\ell-1}>b_{\ell-1}\ge b_{\ell}\ge a_{\ell}$.
Since $F$ is inversion-free, it follows that $b_{i+1}\le a_i< a_{i+1}$ for $2\le i\le \ell-2$
and since $a_i>b_i$ over the same indices it follows that $b_i<a_i<a_{i+1}$.
Therefore, using the filling rule it follows that $F'$ is
\begin{align*}
&\sigma(a_1,b_1)~b_2~\dotsc~b_{\ell-1}\ge a_\ell~\dotsc\\
&\sigma(a_1,b_1)~a_2~\dotsc~a_{\ell-1}\ge b_\ell~\dotsc~c_1~c_2~\dotsc~c_m
\end{align*}
and the major index is clearly preserved from the second column onwards.
To prove that the major index is preserved between the first and second column,
there are two cases. If $b_1< a_2$, then $b_2\le a_1\le b_1< a_2$ as $F$ has no inversions.
Thus
\begin{align*}
F = 
\begin{matrix}
 a_1 & < & a_2 & \dotsc \\
 b_1 & \geq & b_2 & \dotsc
\end{matrix}
\qquad
\text{and}
\qquad
F' = 
\begin{matrix}
 a_1 & \geq & b_2 & \dotsc \\
 b_1 & < & a_2 & \dotsc
\end{matrix}
\end{align*}
Since the descent is in the longest row in both fillings, it is clear that in both cases it has the same contribution to major index. 

In the second subcase, $a_2\le b_1$ and since $F$ has no inversions
it follows that $b_2\le a_1< a_2\le b_1$.
In this case it follows that we are in the situation
\begin{align*}
F =
\begin{matrix}
 a_1 & < & a_2 & \dotsc \\
 b_1 & \geq & b_2 & \dotsc
\end{matrix}
\qquad
\text{and}
\qquad
F' =
\begin{matrix}
 b_1 & \geq & b_2 & \dotsc \\
 a_1 & < & a_2 & \dotsc
\end{matrix}
\end{align*}
and major index is preserved in this case as well.

\noindent
\textbf{Subcase 1b:} Otherwise $b_{\ell-1}< b_{\ell}$. 

\noindent
\textbf{Subcase 1b.i:} If $a_{\ell-1}< b_{\ell}$ then it follows $b_{\ell-1}<a_{\ell-1}<b_{\ell}$.
Furthermore since $F$ has no inversions it follows that $b_{\ell-1}<a_{\ell-1}<a_{\ell}\le b_{\ell}$.
This property also implies that $b_i<a_i<a_{i+1}$ for $2\le i\le \ell-2$ and thus $F'$ has the form
\begin{align*}
&\dotsc~b_{\ell-1}< a_\ell~\dotsc\\
&\dotsc~a_{\ell-1}< b_\ell~\dotsc~c_1~c_2~\dotsc~c_m.
\end{align*}
Using the same treatment in Subcase 1a, it follows that major index is preserved.

\noindent
\textbf{Subcase 1b.ii:} Otherwise $a_{\ell}\le b_{\ell}\le a_{\ell-1}$ and $b_{\ell-1}<b_{\ell}$. 
\begin{itemize}
\item 
If $b_{\ell-1}\le b_{\ell-2}$ then it follows that $b_{\ell-1}\le b_{\ell-2}<a_{\ell-2}< a_{\ell-1}$ as
there are no inversions in $F$. Then $F$ and $F'$ are of the forms
\begin{align*}
F =
\begin{matrix}
\dotsc  a_{\ell-2} & < & a_{\ell-1} & \geq & a_{\ell} \dotsc \\
\dotsc  b_{\ell-2} & \geq & b_{\ell-1} & < & b_{\ell} \dotsc
\end{matrix}
\end{align*}
and
\begin{align*}
F' =
\begin{matrix}
\dotsc  b_{\ell-2} & < & a_{\ell-1} & \geq & a_{\ell} \dotsc \\
\dotsc  a_{\ell-2} & \geq & b_{\ell-1} & < & b_{\ell} \dotsc
\end{matrix}
\end{align*}
and the major index is preserved up to the $(\ell-2)^{nd}$ column by the reasoning in Subcase 1a. 
Therefore the major index is preserved on the entire filling and statement follows.
\item
Otherwise, $b_{\ell-1}> b_{\ell-2}$. Since $F$ is inversion-free it follows that $b_{\ell-1}\le a_{\ell-2}<a_{\ell-1}$. In this case, both $F$ and $F'$ have two rows which are of the form
\begin{align*}
\begin{matrix}
\dotsc  a_{\ell-2} & < & a_{\ell-1} & \geq & a_{\ell} \dotsc \\
\dotsc  b_{\ell-2} & < & b_{\ell-1} & < & b_{\ell} \dotsc
\end{matrix}.
\end{align*}
Continuing left, we either reach an index such that $b_{j+1}\le b_{j}$ for $2\le j\le {\ell-2}$ or there is no such index.
If $b_{j+1}\le b_j$ is the greatest such index it follows using identical reasoning as above that
\begin{align*}
\setcounter{MaxMatrixCols}{20}
F=
\begin{matrix}
\dotsc & a_j & <    & a_{j+1} & \dotsc &  < & a_{\ell-1} & \geq & a_{\ell} & \dotsc & c_1 & \dotsc & c_m\\
\dotsc & b_j & \geq & b_{j+1} & \dotsc &  < & b_{\ell-1} & < & b_{\ell} & \dotsc
\end{matrix}
\end{align*}
while $F'$ has the form
\begin{align*}
F'=
\begin{matrix}
\dotsc & b_j & <    & a_{j+1} & \dotsc &  < & a_{\ell-1} & \geq & a_{\ell} & \dotsc \\
\dotsc & a_j & \geq & b_{j+1} & \dotsc &  < & b_{\ell-1} & < & b_{\ell} & \dotsc & c_1 & \dotsc & c_m
\end{matrix}
\end{align*}
and the major index is preserved between from column $j$ and column $\ell$.
The remaining columns have major index preserved by the logic of Subcase 1a, and
thus the major index is preserved overall. If there is no such $j$,
note that $a_2>b_2$ and $b_1\ge a_1$ and since $F$ is inversion-free it
follows that $b_2\le a_1<a_2$. Therefore $F$ is of the form

\begin{align*}
\setcounter{MaxMatrixCols}{20}
F=
\begin{matrix}
a_1 & <    & a_{2} & \dotsc &  < & a_{\ell-1} & \geq & a_{\ell} & \dotsc & c_1 & \dotsc & c_m\\
b_1 & \geq & b_{2} & \dotsc &  \geq & b_{\ell-1} & < & b_{\ell} & \dotsc
\end{matrix}
\end{align*}
while $F'$ has the form
\begin{align*}
F'=
\begin{matrix}
a_1 & <    & a_{2} & \dotsc &  < & a_{\ell-1} & \geq & a_{\ell} & \dotsc \\
b_1 & \geq & b_{2} & \dotsc &  \geq & b_{\ell-1} & < & b_{\ell} & \dotsc & c_1 & \dotsc & c_m
\end{matrix}
\end{align*}
and major index is preserved as the sum of leg lengths are the same.
\end{itemize}

\noindent
\textbf{Case 2:} There exist $3\le \ell\le k$ such that $a_\ell\le b_\ell$, $a_i>b_i$ for $2\le i\le \ell-1$ and $a_{\ell}$ is below $b_{\ell}$ in $F'$.

\noindent
\textbf{Subcase 2a:} If $a_{\ell-1}< a_\ell$ then it follows that $b_{\ell-1}<a_{\ell-1}< a_{\ell}\le b_\ell$. 
Thus $F'$ has the form
\begin{align*}
&\dotsc b_{\ell-1}~b_\ell~\dotsc~a_k\\
&\dotsc a_{\ell-1}~a_\ell~\dotsc~b_k~c_1~c_2~\dotsc~c_m
\end{align*}
and the major index is preserved using Subcase 1a.

\noindent
\textbf{Subcase 2b:} Otherwise $a_{\ell}\le a_{\ell-1}$.

\noindent
\textbf{Subcase 2b.i:} Suppose $a_{\ell}\le b_{\ell-1}$.
Since $a_{\ell-1}>b_{\ell-1}$ it follows that $a_{\ell-1}>b_{\ell-1}\ge a_{\ell}$.
Therefore, it follows that
\begin{align*}
F=
\begin{matrix}
\dotsc   a_{\ell-1} & a_{\ell} & \dotsc & c_1  \dotsc \\
\dotsc   b_{\ell-1} & b_{\ell} & \dotsc
\end{matrix}
\quad
\text{ and }
\quad
F'=
\begin{matrix}
\dotsc   b_{\ell-1} & b_{\ell} & \dotsc \\
\dotsc   a_{\ell-1} & a_{\ell} & \dotsc & c_1  \dotsc
\end{matrix}
\end{align*}
and the major index is preserved using the reasoning in Subcase 1a.

\noindent
\textbf{Subcase 2b.ii:} Otherwise $b_{\ell-1}< a_{\ell}\le b_{\ell}$, $a_{\ell-1}\ge a_{\ell}$.
Furthermore $a_{\ell-1}\ge b_{\ell}\ge a_{\ell}$ as $F$ has no
inversions and it follows that the $F$ and $F'$ have the form
\begin{align*}
F=
\begin{matrix}
\dotsc   a_{\ell-1}&\geq & a_{\ell}  \dotsc  a_k & c_1  \dotsc \\
\dotsc   b_{\ell-1}&  < & b_{\ell}  \dotsc  b_k
\end{matrix}
\quad
\text{ and }
\quad
F'=
\begin{matrix}
\dotsc   a_{\ell-1}& \geq & b_{\ell}  \dotsc  a_k \\
\dotsc   b_{\ell-1}& < & a_{\ell}  \dotsc   b_k & c_1  \dotsc
\end{matrix}
\end{align*}
The major index is preserved using the reasoning in Subcase 1b.ii

\noindent
\textbf{Case 3:} Case $\ell=2$. In this case $a_2\le b_2$ and by inductive hypothesis,
the major index is preserved from the second column onward.
We need therefore simply to verify that the major index is preserved among descent between the first and second column.

\noindent
\textbf{Subcase 3a:} Suppose that $b_2\le a_1$ then it follows that $a_2\le b_2\le a_1\le b_1$. 
In this case, we have
\begin{align*}
\setcounter{MaxMatrixCols}{20}
F=
\begin{matrix}
a_1 & \geq & a_{2}  \dotsc   a_{k} & c_1  \dotsc  c_m\\
b_1 & \geq & b_{2}  \dotsc   b_{k}
\end{matrix}
\end{align*}
while $F'$ is in one of the forms
\begin{align*}
\begin{matrix}
a_1 & \geq & a_{2}  \dotsc   a_{k} \\
b_1 & \geq & b_{2}  \dotsc   b_{k} & c_1  \dotsc  c_m
\end{matrix}
\quad
\text{ or }
\quad
\begin{matrix}
a_1 & \geq & b_{2}  \dotsc   a_{k} \\
b_1 & \geq & a_{2}  \dotsc   b_{k}  & c_1  \dotsc  c_m\\
\end{matrix}
\end{align*}
The major index is preserved between the first and second columns since
descents are not present in either possible position.

\noindent
\textbf{Subcase 3b:} Otherwise $b_2>a_1$ and then there are two possible cases.

\noindent
\textbf{Subcase 3b.i:} $a_2$ appears below $b_2$ in $F'$:
\begin{itemize}
\item
Case $b_1< a_2$, where it follows that $a_1\le b_1<a_2\le b_2$ so 
\begin{align*}
F=
\begin{matrix}
a_1 & < & a_{2}  \dotsc  & c_1  \dotsc  c_m \\
b_1 & < & b_{2}  \dotsc
\end{matrix}
\end{align*}
and
\begin{align*}
\quad
F'=
\begin{matrix}
a_1 & < & b_{2}  \dotsc   \\
b_1 & < & a_{2}  \dotsc   & c_1  \dotsc  c_m \\
\end{matrix}
\end{align*}
and major index is preserved. 
\item 
Case  $b_1\ge a_2$. With $F$ being inversion-free and $b_1\ge a_2$, $a_2\le b_2$, $a_1<b_2$
it follows that $a_1<a_2\le b_2$. Therefore,
\begin{align*}
F=
\begin{matrix}
a_1 & a_{2}  \dotsc  & c_1  \dotsc  c_m \\
b_1 & b_{2}  \dotsc
\end{matrix}
\quad
\text{ and }
\quad
F'=
\begin{matrix}
b_1 & b_{2}  \dotsc   \\
a_1 & a_{2}  \dotsc   & c_1  \dotsc  c_m 
\end{matrix}
\end{align*}
and major index is preserved.
\end{itemize}

\noindent
\textbf{Subcase 3b.ii:} $a_2$ appears above $b_2$ in $F'$:
\begin{itemize}
\item
Case $b_1\ge b_2$. It follows that $b_1\ge b_2>a_1$ and $F$ has no
inversions it follows that $a_1<a_2\le b_2\le b_1$. 
Thus, the situation is
\begin{align*}
F=
\begin{matrix}
a_1 & <    & a_{2}  \dotsc  & c_1  \dotsc  c_m \\
b_1 & \geq & b_{2}  \dotsc
\end{matrix}
\quad
\text{ and }
\quad
F'=
\begin{matrix}
b_1 & \geq & a_{2}  \dotsc   \\
a_1 & <    & b_{2}  \dotsc   & c_1  \dotsc  c_m 
\end{matrix}
\end{align*}
and major index is preserved. 
\item
Case $b_1< b_2$. Since $b_2>a_1$ and $F$ has no inversions it follows that $a_1<a_2\le b_2$. 
Furthermore $a_1\le b_1$ and we have
\begin{align*}
F=
\begin{matrix}
a_1 & <    & a_{2}  \dotsc  & c_1  \dotsc  c_m \\
b_1 & < & b_{2}  \dotsc
\end{matrix}
\quad
\text{ and }
\quad
F'=
\begin{matrix}
a_1 & < & a_{2}  \dotsc   \\
b_1 & <    & b_{2}  \dotsc   & c_1  \dotsc  c_m 
\end{matrix}
\end{align*}
and major index is preserved.
\end{itemize}

\noindent
\textbf{Case 4:} The final case occurs if $a_1\leq b_1$ and $a_i>b_i$ for $2\leq i\leq k$.

\noindent
\textbf{Subcase 4a:} If $c_1> a_{k}>b_{k}$ or $a_{k}>b_{k}\ge c_1$ then $F'$ is 
\begin{align*}
&\dotsc~b_2~\dotsc b_{k}\\
&\dotsc~a_2~\dotsc a_{k}~c_1~c_2~\dotsc~c_m
\end{align*}
using the logic of Subcase 1a and the major index is preserved similarly to Subcase 1a. 

\noindent
\textbf{Subcase 4b:} Otherwise $a_{\ell}\ge c_1> b_{\ell}$. 
We have
\begin{align*}
F=
\begin{matrix}
\dotsc & a_{\ell} & \geq & c_1  \dotsc  c_m \\
\dotsc & b_{\ell}
\end{matrix}
\quad
\text{ and }
\quad
F'=
\begin{matrix}
\dotsc & a_{\ell} \\
\dotsc & b_{\ell} & > & c_1  \dotsc  c_m 
\end{matrix}
\end{align*}
Using the reasoning from Subcase 1b.ii it follows that there exists $j\ge 1$ such that
\begin{align*}
F=
\begin{matrix}
\dotsc a_j & <    &a_{j+1}\dotsc< & a_{\ell}& \geq & c_1 \dotsc  c_m \\
\dotsc b_j & \geq &b_{j+1}\dotsc< & b_{\ell}
\end{matrix}
\end{align*}
and
\begin{align*}
F'=
\begin{matrix}
\dotsc b_j & <    &a_{j+1}\dotsc < & a_{\ell} \\
\dotsc a_j & \geq &b_{j+1}\dotsc < & b_{\ell}& < & c_1  \dotsc  c_m 
\end{matrix}
\end{align*}
The marked descents in $F$ and $F'$ have leg lengths that sum to the same value so 
the major index is preserved after column $j$ while the major index is preserved 
from column $1$ to column $j$ using Subcase 1a. 

We have now verified all possible cases and the result that the filling rule is 
major index preserving follows via strong induction on the length of the shorter row.

To demonstrate that is is a bijection,
we refer to Theorem\footnote{Theorem 5.1.1 only treats unaugmented fillings. However, since we fix a basement 
on both sides that cannot introduce inversions, the result follows.} 5.1.1 in \cite{Haglund2005Macdonald}, with $t=0$.
It follows that the two sets of fillings $\varphi$ map between are equinumerous.
This together with the fact that $\varphi$ is injective implies that $\varphi$ is also bijective.
\end{proof}

Analogous to \cref{prop:colSetPresInj}, it remains to show that $\varphi$ is compatible
with a larger filling. We proceed as before, and reduce this to a finite set of 
verifications. The approach is similar to the case when we treated coinversion-free fillings,
and the main difference is the details in \cref{clm:badCaseFix2}.

\begin{lemma}\label{lem:fillingcompatible2}
The map $\varphi$ applied to two adjacent rows in a larger filling
does not introduce any inversions.
\end{lemma}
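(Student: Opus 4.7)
The plan is to mirror closely the strategy of \cref{lem:fillingcompatible}. Since whether a triple is an inversion depends only on entries lying in two rows, it suffices to verify the algorithm on augmented fillings with exactly three adjacent rows: the two rows affected by $\varphi$ plus a single fixed third row. I will examine the local $2 \times 3$ sub-grid spanning these three rows and two consecutive columns, assigning each row a rank in $\{1,2,3\}$ that records the relative order of the row lengths (ties broken by giving the lower row the smaller rank). Since only the relative order of the entries determines whether a triple is an inversion, entries may be restricted to $\{1,\ldots,6\}$, yielding a finite enumeration amenable to computer verification. Degenerate configurations, where one of the swapped rows has no entry in one of the two columns, will be handled separately, and as in \cref{lem:fillingcompatible} one may assume without loss of generality that the fixed row contributes all three entries in its column.

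Next, for each inversion-free local grid $F$ with its ranking, the output $F'$ produced by $\varphi$ will be classified according to which arrangement of the two non-fixed entries in the second column the filling rule selects. As in \cref{lem:fillingcompatible}, exactly three situations will occur: (i) the grid is degenerate so that $F'$ is uniquely determined; (ii) either arrangement of the non-fixed entries yields an inversion-free $F'$, and both are valid images under the filling rule; (iii) only one of the two arrangements yields an inversion-free $F'$. In cases (i) and (ii), a straightforward finite check by computer will confirm that no inversion is introduced into $F'$.

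Case (iii) is the main obstacle and requires an analog of \cref{clm:badCaseFix}. The strategy will be to enumerate on the computer the configurations that trigger this case and to formulate a local invariant, playing the role of the down-increasing condition, which exactly characterizes them. The key step is to show that this invariant propagates rightward: if the wrong arrangement is selected at some column, the same invariant is forced upon the adjacent column to the right, and by induction upon every column up to the end of the shortest row. A final exhaustive computer check at the end of the shortest row will show that maintaining this invariant is incompatible with $F$ itself being inversion-free, giving a contradiction and forcing $\varphi$ to select the correct arrangement. The hard part is pinning down the precise invariant, since the inversion orientations in \cref{eq:invTriplets} differ from the coinversion orientations used in \cref{lem:fillingcompatible}, and the invariant for inversions will likely involve the reverse monotonicity (an \emph{up-increasing} or column-monotone condition).

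To summarize, the proof reduces to three explicit computer-verifiable statements: (a) the local situations are exhausted by (i)–(iii); (b) in (i) and (ii) the filling rule produces no inversions; (c) in (iii) the identified invariant propagates, and its propagation eventually contradicts $F$ being inversion-free, thereby forcing the correct choice. Once (c) is established, the local inversion-free property of $F'$ follows, and since inversions are a local condition between two rows, compatibility with the full filling is immediate. As in \cref{lem:fillingcompatible}, this verification will be carried out with two independent implementations to reduce the risk of coding error.
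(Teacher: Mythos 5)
Your proposal follows essentially the same route as the paper's proof: reduction to three adjacent rows and a $2\times 3$ grid with ranks and entries in $\{1,\dotsc,6\}$, the same trichotomy of local cases, and for the problematic third case a rightward-propagating invariant whose persistence contradicts $F$ being inversion-free at the end of the shortest row. The only piece you leave open --- the exact form of the invariant --- is indeed the up-increasing condition you anticipate, which in the paper (\cref{clm:badCaseFix2}) reads $e<d<f$ or $d<f<e$ or $f<e<d$ or ($d=f$ and $e\neq f$), and is itself found by the same computer search you propose.
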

\begin{proof}
As before, it suffices to consider the three-row case, where $\varphi$
is applied to two adjacent rows and the third row is fixed.
Again, we consider a $2\times 3$-grid with entries in $1,\dotsc,6$,
and all three entries in the first column are present.
There are again exactly three things that can occur locally in the grid.
We verify this using the computer.

\noindent
\textbf{The diagram is degenerate.}
One of the rows being swapped has an element missing in the grid
--- the entries in $F'$ are uniquely determined.
By checking all such local cases, we see that all corresponding grids are inversion-free.

\noindent
\textbf{Both possibilities are valid.}
\item The non-fixed entries in the second column can either be swapped or not by the filling rule,
and \emph{both} these possibilities yield an inversion-free grid using the filling rule.
This is verified by computer. As an example of this situation, we might have
\[
F =
\begin{matrix}
3: & 3 & 4 \\
1: & 5 & 6 \\
2: & 1 & 2
\end{matrix}
\qquad
F' =
\begin{matrix}
1: & 3 & 4 \\
3: & 5 & 6 \\
2: & 1 & 2
\end{matrix}
\qquad 
F'' =
\begin{matrix}
1: & {5} & 6 \\
3: & {3} & {4} \\
2: & {1} & 2
\end{matrix}
\]
and in either case, the filling rule produces valid (inversion-free) grid.

\noindent
\textbf{Only one of the grids is inversion-free.}

This situation requires a more careful analysis,
and we need to do a non-local analysis to prove that $F'$ is indeed of the form
that produce a inversion-free filling.

Computer check verifies that the event that only one of the two grids are valid
occurs only under the conditions in the following claim, which then determines that we are in the 
case that produces a valid grid:
\begin{claim}\label{clm:badCaseFix2}
Suppose we swap the longest and shortest row in the $2\times 3$ grid, as in
\begin{align}
\begin{matrix}
{\text{swap}\left.\rule{0pt}{12pt}\right\{ \; }\\
\phantom{a} 
\end{matrix}
\begin{matrix}
3: & a & d \\
1: & b & e \\
2: & c & f
\end{matrix}
\qquad 
\text{ or }
\qquad
\begin{matrix}
\phantom{a} \\
{\text{swap}\left.\rule{0pt}{12pt}\right\{ \; }
\end{matrix}
\begin{matrix}
2: & a & d \\
3: & b & e \\
1: & c & f
\end{matrix}
\end{align}
Furthermore, suppose that one of ${e<d<f}$ or $d<f<e$ 
or $f<e<d$ or $d=f$ and ${e\neq f}$ hold ({up-increasing condition}).
Then the corresponding grid in $F'$ \emph{must} be of the respective forms
\begin{align}\label{eq:23diagramcases32}
\begin{matrix}
1: & \ast & e \\
3: & \ast & d \\
2: & c & f
\end{matrix}
\qquad 
\text{ and }
\qquad
\begin{matrix}
2: & a & d \\
1: & \ast & f \\
3: & \ast & e
\end{matrix}
\end{align}
\end{claim}
\begin{proof}
Suppose that the entries in the second column of $F'$ are not as in \cref{eq:23diagramcases32},
that is, we assume they did not ``flip''.

If the adjacent column to the right of the second column in $F$ also has all three entries present, 
it follows (via computer verification) that these entries
also have the up-increasing condition.
This third column in $F$ then also appears identically in $F'$.
The up-increasing condition is therefore an invariant, present in all further columns to the right,
via induction.

Eventually, we reach the end one of the shortest row, where last complete column satisfies
the up-increasing condition and is identical in both $F$ and $F'$.
Finally, an exhaustive search on the computer shows that 
it is impossible for $F$ and $F'$ to be of these specified forms and simultaneously be inversion-free.

\medskip

Hence, the entries in $F'$ must be arranged as in \cref{eq:23diagramcases32}.
Computer verification on the local situation in \cref{eq:23diagramcases32} verifies that 
the filling rule produces no inversions.
\end{proof}

Thus, $\varphi$ is an injection and together with Theorem 5.1.1 in \cite{Haglund2005Macdonald},
it is also a bijection. It is also possible to prove that this is a bijection by
a computer-style method similar to what is done above.
\end{proof}

As a final comment on this section, note that the set $\InvFree(\alpha,\sigma)$ (with big basement) is empty 
if there are $i<j$ such that $\alpha_i < \alpha_j$ and $\sigma_i < \sigma_j$.
This is due to the fact that the first entry in row $j$, 
and the two basement entries $\sigma_i$ and $\sigma_j$ form an inversion triple.

\section{Applications}\label{sec:applications}

Despite the relatively lengthy proofs given, the realization the fact that these
column set preserving maps exist allow for a multitude of short corollaries.

We begin with a technical result that has been partially proved in a variety of special cases.
\begin{proposition}\label{prop:atMostOneCoInvFree}
Given fixed column sets, there exists at most one coinversion-free 
filling with shape $\alpha$ and decreasing basement.
\end{proposition}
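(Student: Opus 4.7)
My plan is to induct on $\inv^\ast(\alpha) \coloneqq |\{(i,j) : i<j,\ \alpha_i < \alpha_j\}|$, the number of pairs of positions where $\alpha$ deviates from being weakly decreasing, using \cref{prop:colSetPresInj} as the main tool. The key observation is that for the decreasing basement $w_0$ we have $(w_0)_i = (w_0)_{i+1}+1$ at every $i$, so \cref{prop:colSetPresInj} in fact provides a column-set preserving \emph{bijection} (not merely an injection), which is what lets the induction close.

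For the inductive step, suppose $\inv^\ast(\alpha) > 0$ and pick any $i$ with $\alpha_i < \alpha_{i+1}$; set $\beta \coloneqq s_i\alpha$, so that $\beta_i > \beta_{i+1}$ and $\inv^\ast(\beta) = \inv^\ast(\alpha)-1$. Applying \cref{prop:colSetPresInj} to the pair $(\beta, w_0)$ yields a column-set preserving bijection
\[
\phi:\ \CoInvFree(\beta, w_0)\ \longleftrightarrow\ \CoInvFree(\alpha, w_0)\ \sqcup\ \CoInvFree(\alpha, s_i w_0),
\]
and restricting $\phi^{-1}$ to $\CoInvFree(\alpha, w_0)$ gives a column-set preserving injection into $\CoInvFree(\beta, w_0)$. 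By the induction hypothesis applied to $\beta$, any two fillings in $\CoInvFree(\beta, w_0)$ with the same column sets are equal, so injectivity transfers the same uniqueness back to $\CoInvFree(\alpha, w_0)$.

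The base case $\inv^\ast(\alpha)=0$ is the genuinely difficult step: here $\alpha$ is a partition $\lambda$, and the reduction machinery above gives no further simplification. My strategy would be to process the columns of any filling in $\CoInvFree(\lambda, w_0)$ from left to right and argue that, given the entries already fixed in column $c-1$ (or in the basement, when $c=1$), the entries of column $c$ must be distributed among the available rows in a way that makes every type-$A$ triple with column $c-1$ an inversion; by the classification of the six orientations of a type-$A$ triple, each such pairwise constraint uniquely forces which of two entries sits above the other. The main obstacle is to verify that these pairwise constraints are always mutually consistent and jointly determine a unique arrangement of each column from its underlying set; granting this, the induction then completes the proof.
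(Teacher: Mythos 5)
Your reduction to the partition case is correct and is essentially the paper's own first step: since $(w_0)_i=(w_0)_{i+1}+1$ for all $i$, \cref{prop:colSetPresInj} gives a column-set preserving bijection, and restricting its inverse to $\CoInvFree(\alpha,w_0)$ transfers uniqueness down the induction on $\inv^\ast(\alpha)$. You are in fact more careful than the paper here, which just says to ``apply $\phi$ until the shape becomes a partition.''

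The base case, however, is a genuine gap, and you have correctly located where it lies: you must show that the pairwise type-$A$ constraints between columns $c-1$ and $c$ are mutually consistent and force a \emph{unique} placement of the entries of column $c$. This is not automatic. Each pairwise constraint for rows $r<r'$ refers to the entry in column $c-1$ of row $r$, i.e.\ to the left neighbour of whichever of the two entries ends up \emph{on top} --- so the constraints are not independent binary choices but a system whose consistency (a transitivity-type statement) has to be proved, and you explicitly defer this (``granting this''). The paper avoids a direct consistency argument by a counting trick: it exhibits, for each admissible list of column sets, one coinversion-free filling via an explicit greedy rule (place next to an entry $e$ the largest unused element of the next column set that is $\leq e$, else the largest remaining), and then invokes the specialization $\macdonaldE_{\lambda}(\xvec;1,0)=\elementaryE_{\lambda'}(\xvec)$, which says the number of coinversion-free fillings of partition shape equals the number of column-set lists; surjectivity of the column-set map therefore forces all fibers to be singletons. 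To complete your argument you would either need to carry out the consistency verification directly, or substitute this enumerative step for it.
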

\begin{proof}
To prove that there is at most one coinversion free filling 
with shape $\alpha$ and decreasing basement, simply apply 
the column set preserving operator $\phi$ until the shape becomes a partition.

In the partition case, any list of column sets with sizes compatible with $\alpha$
admits a unique coinversion-free filling. This unique filling can be constructed from the column sets
via the following iterative process going column by column, and in each column, top to bottom:
Given an entry $e$ in column $i$, the adjacent entry in column $i+1$
is found by taking the largest unused entry in column set $i+1$ less or equal to $e$,
and if there is no such element, simply take the largest entry.
It is easy to verify that this gives a coinversion-free filling and given $\macdonaldE_{\lambda}(\xvec;1,0)=e_{\lambda'}(\xvec)$ the result follows.
\end{proof}

\medskip 

Given a coinversion-free filling $F$, we construct the \emph{biword} of $F$ as follows:
Let the top row be the non-basement entries of $F$ listed in increasing order
and the bottom row be the corresponding columns the entries belong to,
listed in increasing order in case of a tie in the first row, see \cref{eq:biworexample} for an example.
\begin{align}\label{eq:biworexample}
\begin{ytableau}
 \mathbf{4} &3&3&2&1&3\\
 \mathbf{3} &2&2&1\\
 \mathbf{2} &1&4\\
 \mathbf{1}
\end{ytableau}
\qquad \longrightarrow \qquad
\begin{pmatrix}
1&1&1&2&2&2&3&3&3&4 \\
4&3&1&3&2&1&5&2&1&2
\end{pmatrix}
\end{align}
We let $\cw(F)$ denote the lower row in this biword, the \emph{charge word} of $F$.
Finally, define the charge of $F$ as $\charge(\cw(F))$,
where $\charge(\cdot)$ is defined as in \emph{e.g.}, \cite{qtCatalanBook},
by decomposing $\cw(F)$ into \emph{standard subwords}, 
computing $\charge(w)\coloneqq \maj(\reverse(w^{-1}))$ of each such subword $w$ (interpreted as a permutation), followed by adding the results.
The standard subwords are extracted iteratively by finding the rightmost occurrence of the 
smallest element, then scanning right to left for the next smallest element, 
looping around it necessary, and then repeating this process
until one has found an occurrence of largest element to the word. 
These letters form the first subword
and this process is repeated on the remaining letters until there are no more letters left.

For example, the word $1322133241214$ has the subword decomposition
$w_1=3214$, $w_2=3241$, $w_3=321$, $w_4=12$, extracted as
\begin{align*}
 132 213 \bar{3} 241 \bar{2} \bar{1}\bar{4} &\to 3214\quad (1)\\
 132 21 \phantom{3}\bar{3} \bar{2}\bar{4}\bar{1}\phantom{2}\phantom{1}\phantom{4}  &\to 3241 \quad (1) \\
 1\bar{3}2 \bar{2}\bar{1} \phantom{3}\phantom{3} \phantom{2}\phantom{4}\phantom{1}\phantom{2}\phantom{1}\phantom{4}   &\to 321 \quad (0) \\	
 \bar{1}\phantom{3}\bar{2} \phantom{2}\phantom{1} \phantom{3}\phantom{3} \phantom{2}\phantom{4}\phantom{1}\phantom{2}\phantom{1}\phantom{4}  &\to 12 \quad (1)
\end{align*}
The charge for the permutations is displayed to the right, so the total charge of the word is $3$.
\medskip

\begin{theorem}\label{thm:composition-charge}
Let $F$ be a coinversion-free filling of shape $\alpha$ with the key basement $w_0$.
Then $\charge(\cw(F))=\maj(F)$.
\end{theorem}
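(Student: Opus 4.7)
The plan is to reduce to the case where $\alpha$ is a partition and then invoke the classical Lascoux--Schützenberger theory of charge.

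First I would observe that the biword of a coinversion-free filling $F$---and hence $\cw(F)$ and $\charge(\cw(F))$---depends only on the column sets of $F$, since each integer label contributes to the top row of the biword a block of entries whose corresponding columns in the bottom row are determined solely by which columns contain that label. Because $w_0$ is the strictly decreasing basement, $(w_0)_i = (w_0)_{i+1}+1$ holds at every position, so \cref{prop:colSetPresInj} combined with \cref{prop:bijExistence} yields a genuine bijection
\[
\phi : \CoInvFree(\alpha, w_0) \longleftrightarrow \CoInvFree(s_i\alpha, w_0) \sqcup \CoInvFree(s_i\alpha, s_i w_0)
\]
whenever $\alpha_i > \alpha_{i+1}$, and by construction $\phi$ preserves both column sets and the major index. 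Read in reverse and restricted to the $w_0$-component of the codomain, $\phi^{-1}$ turns any ascent of the shape into a descent while keeping the basement equal to $w_0$. Iterating this procedure sorts $\alpha$ into a partition $\lambda$ and produces a filling $\tilde F \in \CoInvFree(\lambda, w_0)$ with $\maj(\tilde F) = \maj(F)$ and $\cw(\tilde F) = \cw(F)$. Consequently it suffices to establish the identity for partition shapes.

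In the partition case, Type~$B$ triples are vacuous, and the combined non-attacking and coinversion-free conditions on a filling $\tilde F$ of shape $\lambda$ with basement $w_0$ reduce to the familiar semistandard conditions, so $\tilde F$ with the basement stripped off may be identified with a semistandard Young tableau $T$ of shape $\lambda$. A direct comparison of the leg statistic on descent boxes of $\tilde F$ with the leg-style increments counted by the classical cocharge algorithm yields $\maj(\tilde F) = \cocharge(T)$.

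It therefore remains to identify $\charge(\cw(\tilde F))$ with $\cocharge(T)$. Unpacking the definitions, the standard-subword decomposition of $\cw(\tilde F)$---extracted via the rightmost-first, right-to-left scan described in the introduction to the charge word---records, on each pass, the columns occupied by a ``slice'' of $T$ through consecutive labels $1, 2, \ldots$, which is exactly the data swept out by the classical cocharge procedure applied to the reading word of $T$. I expect this final bookkeeping check to be the main obstacle: one must verify that the particular tie-breaking used to construct $\cw(\tilde F)$ together with the rightmost-first subword extraction lines up precisely with the Lascoux--Schützenberger conventions. The reduction in the first step is essentially mechanical; all of the combinatorial substance of the theorem is concentrated in the partition case and in this final identification of the two charge-style statistics.
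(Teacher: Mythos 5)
Your first step---observing that $\cw(F)$ depends only on the column sets of $F$ and using the column-set- and major-index-preserving bijection $\phi$ (applied at every ascent of the shape, which is legitimate because $w_0$ satisfies $(w_0)_i=(w_0)_{i+1}+1$ everywhere) to sort $\alpha$ into a partition---is precisely how the paper begins its proof, and that reduction is fine.

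The partition case, however, contains a genuine gap. A coinversion-free filling of partition shape $\lambda$ with basement $w_0$ is \emph{not} a semistandard Young tableau once the basement is stripped off: in the example displayed in \cref{eq:biworexample} the top row reads $3,3,2,1,3$ and the second non-basement column reads $3,2,4$, neither of which is compatible with any semistandard (or reverse-semistandard) convention. Indeed $\macdonaldE_{\lambda}(\xvec;1,0)=\elementaryE_{\lambda'}(\xvec)$, so coinversion-free fillings of shape $\lambda$ with basement $w_0$ correspond to \emph{arbitrary} choices of column sets of sizes $\lambda'_1,\lambda'_2,\dotsc$, of which there are far more than there are elements of $\SSYT(\lambda)$. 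Consequently the tableau $T$ in your claimed identity $\maj(\tilde F)=\cocharge(T)$ does not exist, and the final identification of $\charge(\cw(\tilde F))$ with $\cocharge(T)$---which you yourself flag as the main obstacle---cannot be carried out. (The tableau that does eventually enter the picture, in the theorem following this one in the paper, is the RSK \emph{insertion} tableau of the biword, and its relation to $\maj$ is a consequence of the present theorem together with Knuth invariance of charge, not an ingredient of its proof.) What the paper actually does in the partition case is: (i) show that the first non-basement column of $\tilde F$ is strictly decreasing; (ii) deduce from the explicit reconstruction of $\tilde F$ from its column sets that the standard subwords $w_1,w_2,\dotsc$ extracted from $\cw(\tilde F)$ correspond, in order, to the rows of $\tilde F$; and (iii) check that $\maj(\reverse(w_j^{-1}))$ equals the contribution of row $j$ to $\maj(\tilde F)$, since $w_j^{-1}$ has the same relative order as the entries of row $j$. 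You would need to supply an argument of this kind to finish the partition case.
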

\begin{proof}
Note that $\cw(F)$ is uniquely determined by the column sets of $F$ by construction. 
Therefore, after applying $\phi$ repeatedly, it suffices to prove the 
theorem when $F$ is of partition shape, and from hereon, $F$ is assumed to be of partition shape.
Under this assumption, $F$ can be constructed from the column sets
via the following iterative process given in \cref{prop:atMostOneCoInvFree}. 
\medskip

\textbf{Claim:} The first column (the column adjacent to the basement) of $F$ is strictly decreasing. 
Suppose the largest entry in the basement (and thus in the filling) is $n$. 
Note that the $i^{th}$ largest entry in the first column is at most $n+1-i$, 
since all entries in a fixed column are distinct. 
Furthermore, since the $i^{th}$ largest element in the basement is exactly $n+1-i$,
the previous construction rule implies that the first column is strictly decreasing.
\medskip

Now note that the subwords obtained in the word decomposition of $\cw(F)$ naturally correspond to rows of $F$:
The claim ensures that the largest entry in the first column is in the topmost row.
It follows that entry is also the first entry of the first subword of $\cw(F)$.
Now, by the iterative process on how to recover $F$ from its column sets,
it is straightforward to see that the first subword of $\cw(F)$ corresponds to the first row of $F$.
By using the fact that the first column is decreasing,
this argument can be repeated for the remaining rows to 
prove that subword $j$ of $\cw(F)$ corresponds to row $j$ of $F$.

Finally, since the inverse of the $j^{th}$ subword $w_j$ can be seen to have the same relative
order as the entries in row $j$, it follows that $\maj(\reverse(w_j^{-1}))$ 
is equal to the contribution of row $j$ to $\maj(F)$.
Hence, $\maj(F)$ is equal to $\charge(\cw(F))$.
\end{proof}

Note that \cref{thm:composition-charge} critically relies on the presence of a decreasing basement,
and the equality does not hold for non-decreasing basements.
\medskip

We are now prepared to give an analog of the famous cocharge formula 
for partition case of the non-symmetric Macdonald polynomials. 
Note that the following identity can also be proven using properties of 
LLT polynomials proved in \cite{Haglund2005Macdonald} along with the symmetric cocharge formula, 
however this proof is far more in the style of the proof of the original cocharge formula given 
in \cite{Haglund2005Macdonald}. This particular version also has the advantage 
that seems to it generalizes to a non-symmetric setting, see \cref{conj:positiveKeyExp} below.
\begin{theorem}
Let $\lambda$ be a partition. 
Then
\[
\macdonaldE_{\lambda}(\xvec;q,0)=\sum_{\mu}\schurS_{\mu}(\xvec)\sum_{P \in \SSYT(\mu',\lambda')}q^{\charge(P)}.
\]
\end{theorem}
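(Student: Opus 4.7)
The plan is to express the left-hand side as a sum over biwords using \cref{thm:composition-charge}, identify these biwords with $0/1$-matrices, and then apply the RSK correspondence together with the Knuth-invariance of charge to obtain the Schur expansion.

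Beginning from \cref{eq:qAtoms}, \cref{thm:composition-charge} converts the major-index weighting into a charge weighting:
\[
\macdonaldE_\lambda(\xvec;q,0) = \sum_{F \in \CoInvFree(\lambda,w_0)} \xvec^F q^{\maj F} = \sum_{F \in \CoInvFree(\lambda,w_0)} \xvec^F q^{\charge(\cw(F))}.
\]
By \cref{prop:atMostOneCoInvFree} (together with the surjective construction given in its proof), the map sending $F$ to its tuple of column sets $(C_1,\dotsc,C_{\lambda_1})$ is a bijection onto tuples of subsets of $\{1,\dotsc,n\}$ with $|C_j|=\lambda'_j$. This data is equivalent to a $0/1$-matrix $M$ whose row sums record the weight of $F$ (so $\xvec^F$ is the monomial in the row sums) and whose column sums equal $\lambda'$; the biword $\binom{u}{v}$ defined just before the theorem is precisely the standard lexicographic encoding of $M$, with $u$ the weakly increasing list of row indices and $v=\cw(F)$ the corresponding list of column labels.

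Next, I would apply the RSK bijection for $0/1$-matrices to $M$, obtaining a pair of semistandard Young tableaux $(P,Q)$ of conjugate shapes; write the shape of $Q$ as $\mu$, so that the shape of $P$ is $\mu'$. The construction gives $P \in \SSYT(\mu',\lambda')$ with content $\lambda'$ (from the column sums of $M$) and $Q$ semistandard of shape $\mu$ with content equal to the weight of $F$ (from the row sums). The Lascoux--Schützenberger theorem that charge is an invariant of Knuth-equivalence classes of words then yields $\charge(\cw(F)) = \charge(v) = \charge(P)$, where $\charge(P)$ denotes the common value of the charge on the reading words in the Knuth class of $P$. Summing over $F$ (equivalently over all pairs $(P,Q)$) and factoring, using $\sum_Q \xvec^Q = \schurS_\mu(\xvec)$ as $Q$ ranges over semistandard tableaux of shape $\mu$:
\[
\macdonaldE_\lambda(\xvec;q,0) = \sum_\mu \schurS_\mu(\xvec) \sum_{P \in \SSYT(\mu',\lambda')} q^{\charge(P)}.
\]

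The main obstacle I anticipate is pinning down the exact RSK convention. The tie-breaking rule used in the biword (equal values in the top row corresponding to column indices listed in a specific order, as one sees in the worked example preceding \cref{thm:composition-charge}) must match the variant of RSK for $0/1$-matrices that produces SSYT pairs of conjugate shapes, so that summing over $Q$ of shape $\mu$ recovers $\schurS_\mu(\xvec)$ and not $\schurS_{\mu'}(\xvec)$. Once this convention is verified, the identity follows immediately from \cref{thm:composition-charge}, \cref{prop:atMostOneCoInvFree}, the RSK bijection for $0/1$-matrices, and the Lascoux--Schützenberger Knuth-invariance of charge.
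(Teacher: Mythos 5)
Your proposal is correct and follows essentially the same route as the paper: rewrite $\maj$ as $\charge$ of the charge word via \cref{thm:composition-charge}, use the partition case of \cref{prop:atMostOneCoInvFree} to identify coinversion-free fillings with all admissible biwords (equivalently $0/1$-matrices), apply dual RSK, and invoke Knuth-invariance of charge to factor the sum into $\schurS_\mu(\xvec)\sum_P q^{\charge(P)}$. Your extra care about the $0/1$-matrix encoding and the tie-breaking convention (the blocks of the charge word are in fact strictly decreasing, matching the dual RSK producing conjugate-shape pairs) is a useful clarification of a point the paper passes over quickly, but it is not a different argument.
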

\begin{proof}
The partition case of \cref{prop:atMostOneCoInvFree}
implies that every possible second row (with strictly decreasing blocks in the second row),
in a biword can be obtained from some coinversion-free filling of shape $\lambda$.

By performing RSK on such biwords, every possible pair $(P,Q)$ of SSYT's 
with the insertion tableau $P$ having shape $\mu$ 
and the recording tableau $Q$ having shape $\mu'$ appears.
The proof of this is identical to the one given for the traditional RSK in \cite{StanleyEC2}.
Since charge is Knuth-invariant, two coinversion-free fillings $F_1$ and $F_2$ with identical
insertion tableau $P$ also have the same charge, and
\cref{thm:composition-charge} implies that  $\maj(F_1)=\maj(F_2)=\charge(P)$.
It now follows that
\begin{align*}
\macdonaldE_{\lambda}(\xvec;q,0) &=\sum_{F\in \CoInvFree(\lambda,w_0)} q^{\maj(F)}\xvec^F 
=  \sum_{F\in \CoInvFree(\lambda,w_0)} q^{\charge(c(F))} \xvec^F \\
&\stackrel{RSK}{=} \sum_{P\in \SSYT(\mu',\lambda')} q^{\charge(P)} \sum_{Q\in \SSYT(\mu)} \xvec^{Q} \\
&=  \sum_{\mu} \schurS_{\mu}(\xvec) \sum_{P\in \SSYT(\mu',\lambda')} q^{\charge(P)}
\end{align*}
as desired.
\end{proof}

We believe that the above theorem carries over in a more general setting:
\begin{conjecture}\label{conj:positiveKeyExp}
Let $\alpha$ be a composition. Then
\[
\macdonaldE_\alpha(\xvec;q,0) = \sum_{ F \in \CoInvFree(\alpha,w_0) } q^{\maj F}\xvec^F 
= \sum_{P \in \SSYT(\mu',\lambda')} q^{\charge(P)} \key_{\gamma(P,\alpha)}(\xvec)
\]
where $\lambda = \sort(\alpha)$ and $\gamma(P,\alpha)$ is a 
composition\footnote{That rearranges to the conjugate shape of $P$.} determined by $\alpha$ and $P$.

In particular, if $S_Q$ is the subset of fillings in $\CoInvFree(\alpha,w_0)$ 
with recording tableau $Q \in \SSYT(\mu',\lambda')$,
then $\sum_{F \in S_Q} \xvec^F$ is a key polynomial.
\end{conjecture}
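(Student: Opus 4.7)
My plan is to extend the argument of the preceding theorem from partition shapes to arbitrary composition shapes, using the column-set-preserving bijection $\phi$ of \cref{prop:colSetPresInj} together with RSK and a Demazure-crystal interpretation of the resulting fibers.

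The first step is to extend the cocharge identity $\charge(\cw(F)) = \maj(F)$ of \cref{thm:composition-charge} from partition shapes to arbitrary composition shapes. Since $\cw(F)$ depends only on the column sets of $F$, the map $\phi$ of \cref{prop:colSetPresInj} preserves both $\cw(F)$ and $\maj F$. Iterating $\phi$ carries any $F \in \CoInvFree(\alpha, w_0)$ to some $F^{\ast} \in \CoInvFree(\lambda, w_0)$ with $\lambda = \sort(\alpha)$, the same biword, and the same major index, so the partition-shape identity transports to composition shapes.

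Next, I would apply RSK to the biword. By \cref{prop:atMostOneCoInvFree}, $F \mapsto \cw(F)$ is injective on $\CoInvFree(\alpha, w_0)$, so RSK yields an injection $F \mapsto (P(F), Q(F))$. Knuth invariance of charge makes $\charge(\cw(F))$ depend only on $P(F)$, yielding the decomposition
\[
\macdonaldE_\alpha(\xvec; q, 0) = \sum_{P} q^{\charge P} \sum_{F \,:\, P(F) = P} \xvec^F,
\]
where $P$ ranges over the insertion tableaux that arise; in the partition case this recovers the Schur expansion of the preceding theorem. What remains is to identify the inner sum as $\key_{\gamma(P,\alpha)}(\xvec)$ for a composition $\gamma(P, \alpha)$ rearranging the conjugate of the shape of $P$.

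The main obstacle is this final identification. In the partition case the inner sum is a Schur polynomial because the recording tableaux range freely over $\SSYT(\mu)$; for a composition $\alpha$, the constraint $F \in \CoInvFree(\alpha, w_0)$ restricts the admissible recording tableaux, and the conjecture posits that this restricted set is a Demazure subset of $\SSYT(\mu)$ whose generating function is a key polynomial. The natural strategy is to follow the chain of applications of $\phi^{\pm 1}$ along a reduction $\alpha \rightsquigarrow \lambda$: at the polynomial level this chain is implemented by Demazure operators $\pi_i$ via \cref{prop:bijExistence}, and one hopes that fiberwise on RSK each step acts as the corresponding Demazure-crystal operator on recording tableaux, so that $\gamma(P,\alpha)$ can be read off from the sequence of swaps. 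Making this fiberwise crystal correspondence precise --- in particular, showing that the truncation of $\SSYT(\mu)$ produced by the coinversion-free condition is exactly the Demazure crystal indexed by $\gamma(P,\alpha)$ --- is the genuine difficulty beyond the results already established in the paper.
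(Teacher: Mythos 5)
The statement you are trying to prove is stated in the paper as a \emph{conjecture}: the paper offers no proof of it, only the remark that a combinatorial proof of the key expansion of $\macdonaldE_\alpha(\xvec;q,0)$ via weak dual equivalence appears in the cited work of Assaf. Your proposal likewise does not prove it. The first two steps of your outline --- that $\charge(\cw(F))=\maj(F)$ for composition shapes, and the resulting decomposition $\macdonaldE_\alpha(\xvec;q,0)=\sum_P q^{\charge P}\sum_{F: P(F)=P}\xvec^F$ via RSK and Knuth invariance --- are essentially already contained in \cref{thm:composition-charge} and the partition-shape theorem that follows it (note that \cref{thm:composition-charge} is already stated for arbitrary composition shapes, so there is nothing to ``extend'' there). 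The entire new content of the conjecture is the identification of each fiber $\sum_{F\in S_Q}\xvec^F$ as a key polynomial $\key_{\gamma(P,\alpha)}(\xvec)$, and that is precisely the step you defer as ``the genuine difficulty.''

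The route you sketch for that step is plausible but unsupported by anything established in the paper. \cref{prop:bijExistence} realizes the operators $\pi_i$ only at the level of the full generating function $\macdonaldE^\sigma_\alpha(\xvec;q,0)$; nothing shows that $\phi$ is compatible with RSK insertion tableaux, or that it restricts fiberwise to a Demazure crystal operator on recording tableaux, which is what your argument would need. A further technical caution: iterating $\phi$ from \cref{prop:colSetPresInj} sends $\CoInvFree(\alpha,w_0)$ into a disjoint union over \emph{several} basements, not into a single set $\CoInvFree(\lambda,w_0)$, so even the ``transport along $\phi$'' bookkeeping requires more care than your outline suggests (the paper's own proofs sidestep this by arguing through column sets rather than through the images of $\phi$ directly). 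In short: your proposal correctly locates the open step, but supplies no proof of it, so the conjecture remains exactly as open as the paper leaves it.
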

Note that a recent combinatorial proof of the key expansion of $\macdonaldE_\alpha(\xvec;q,0)$
using \emph{weak dual equivalence} was given in \cite{AssafKostka}.

\medskip

The following example illustrates \cref{conj:positiveKeyExp}.
\begin{example}
We have the following expansions in the key basis:
\begin{align*}
\macdonaldE_{320}(\xvec;q,0) &= \key_{320} + q \key_{311} +  q \key_{221} +  q^2 \key_{221} \\
\macdonaldE_{203}(\xvec;q,0) &= \key_{203} + q \key_{113} +  q \key_{212} +  q^2 \key_{221} \\
\macdonaldE_{023}(\xvec;q,0) &= \key_{023} + q \key_{113} +  q \key_{122} +  q^2 \key_{221} \\
\end{align*}
The corresponding recording tableaux are
\[
 \young(11,22,3),\quad
 \young(112,2,3),\quad
 \young(113,22),\quad
 \young(112,23).
\]
Observe that all coefficients are identical, and only the 
composition indexing the key polynomials differ by some permutation.
Furthermore, several different permutations of the same partition might be present,
as with $\key_{212}$ and $\key_{221}$ above.
This refines the Kostka--Foulkes polynomials,  as different terms 
in the Kostka--Foulkes polynomials might be associated to different key polynomials
in the non-symmetric setting.
\end{example}
The question of finding an appropriate generalization of charge that explains this phenomena
was raised by Lascoux in \cite[p. 267--268]{LascouxPolynomialsBook},
and thus \cref{thm:composition-charge} answers this question.

\subsection{Inversion-free fillings and Hall--Littlewood polynomials}

Similar to \cref{prop:atMostOneCoInvFree}, we have the following result:
\begin{proposition}\label{prop:atMostOneInvFree}
Given fixed column sets there exists at most one inversion-free 
filling with shape $\alpha$ and big basement $\sigma$.
\end{proposition}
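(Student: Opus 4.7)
The plan is to mirror the proof of Proposition~\ref{prop:atMostOneCoInvFree} using the column-set preserving bijection $\varphi$ from Lemmas~\ref{lem:tworowCase2} and~\ref{lem:fillingcompatible2} in place of $\phi$. Together with the trivial swap of adjacent basement entries (available whenever $\alpha_i=\alpha_{i+1}$ and preserving both the set of inversion triples and the column multisets), repeated applications of $\varphi$ reduce the problem to the case of partition shape $\lambda=\sort(\alpha)$ with big basement $w_0$. Since each individual step preserves column sets, any two fillings in $\InvFree(\alpha,\sigma)$ with matching column sets yield, under the same sequence of moves, two fillings in $\InvFree(\lambda,w_0)$ that still have matching column sets.

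It then suffices to prove uniqueness in the partition-shape case with basement $w_0$. I would build the unique inversion-free filling column by column from left to right, and within each column top to bottom. In the first column, the basement triples $(\sigma_i,f_i,f_j)$ with $i<j$ force the entries $f_1,f_2,\dots$ to be \emph{strictly} decreasing from top to bottom: equal entries $f_i=f_j$ produce the inversion pattern $b<c<a$ via the subscript tie-break convention of \eqref{eq:invTriplets}, together with the fact that $\sigma_i$ is infinite. Hence the first column multiset is arranged uniquely. For each subsequent column I would proceed top to bottom; since $\lambda$ is a partition, no type-$B$ triple can arise (these require $\alpha_i>\alpha_j$ for some $j<i$), so only type-$A$ triples involving the adjacent left column must be avoided. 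The natural rule is the dual of the coinversion-free one: given the left neighbour $e$, pick the smallest unused entry in the column multiset that is at least $e$, or the smallest remaining entry if no such entry exists.

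The main obstacle is to formalise this selection rule and to verify that it produces a globally inversion-free filling, not merely one that avoids the local triple examined at each step. As in Proposition~\ref{prop:atMostOneCoInvFree}, I expect this to come down to a careful triple-by-triple check, essentially dual to the one behind the coinversion-free construction: the inversion condition $b<c<a$ plays the role dual to the coinversion condition, and correspondingly the ``largest entry $\leq e$'' choice in the coinversion-free construction becomes ``smallest entry $\geq e$'' here. Once that rule is shown to give an inversion-free filling for every admissible choice of column sets, uniqueness follows, since at each step the choice is forced.
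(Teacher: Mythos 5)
Your overall strategy --- reduce to partition shape with the column-set preserving bijection $\varphi$, then show that the column sets force a unique inversion-free filling by a greedy column-by-column, top-to-bottom construction --- is exactly the route the paper intends (its proof of \cref{prop:atMostOneInvFree} consists of the single sentence that it is similar to \cref{prop:atMostOneCoInvFree}). The reduction step and the observation that partition shape rules out type-$B$ triples are both fine. The problem is that the two concrete claims you make about the partition-shape base case are false, and the base case is the only part of the proof that requires actual work.

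First, the first column of an inversion-free partition-shape filling with big basement is \emph{weakly increasing} from top to bottom, not strictly decreasing. In the triple $(a,b,c)=(\sigma_i,f_i,f_j)$ the basement entry $a$ is the largest, so the only two possible orders are $b<c<a$ (not an inversion) and $c<b<a$ (an inversion); hence one needs $f_i\le f_j$ for $i<j$. Moreover, when $f_i=f_j$ the tie-break of \eqref{eq:invTriplets} gives $b_1<c_2$, so equal entries form a \emph{non}-inversion and repeated entries in a column are allowed ($\FIL$ carries no non-attacking restriction; the example displayed before \cref{TheoremCocharge} has two $1$'s in its first column). Second, your selection rule ``smallest unused entry at least $e$'' produces inversions. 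Take $\lambda=(2,2)$ with big basement and both column multisets equal to $\{1,2\}$: the first column is forced to be $1$ above $2$, and your rule then places $1$ in the top row of the second column; but the resulting triple $a=1$, $b=1$, $c=2$ has order $b<a<c$ (since $b_1<a_3$ in ties), which is an inversion. The unique inversion-free filling puts $2$ on top there, so the rule must use strict inequality --- take the smallest unused entry \emph{strictly greater} than $e$, else the smallest remaining --- because the tie-break conventions make an entry equal to its left neighbour behave as if it were smaller. You correctly flag that the global (non-adjacent triples in the same column) verification still has to be carried out, but as written both the first-column claim and the greedy rule fail, so the argument does not yet establish the proposition.
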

\begin{proof}
Proof is similar to the one in \cref{prop:atMostOneCoInvFree}.
\end{proof}

A number of analogous results are also possible for the modified Hall--Littlewood polynomials. 
The following identity is given in \cite[Thm. 5.1.1]{HaglundNonSymmetricMacdonald2008}:
Let $\tau$ be a permutation and $\lambda$ be a partition. Then
\begin{align}\label{eq:generalMacdonaldHIdentity}
\macdonaldH^{\tau w_0}_{\tau\lambda}(\xvec;q,t) = \macdonaldH_\lambda(\xvec;q,t).
\end{align}
The proof of this is rather indirect, and J.~Haglund asked for a bijective proof of this identity. 
With our bijection $\varphi$, we can prove the $t=0$ case:
\begin{theorem}
For every $\tau\in \symS_n$ and partition $\lambda$, 
there is a column-set preserving bijection that establish the identity
\begin{align}\label{eq:macdonaldHIdentity}
 \macdonaldH^{\tau w_0}_{\tau\lambda}(\xvec;q,0) = \macdonaldH_\lambda(\xvec;q,0).
\end{align}
\end{theorem}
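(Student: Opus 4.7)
The plan is to iterate the column-set and major-index preserving bijection $\varphi$ from \cref{lem:tworowCase2}, extended to arbitrary fillings by \cref{lem:fillingcompatible2}. Recall that $\varphi$ gives a bijection $\InvFree(\alpha,\sigma)\leftrightarrow \InvFree(s_i\alpha,s_i\sigma)$ whenever $\sigma_i>\sigma_{i+1}$ and $\alpha_i\geq \alpha_{i+1}$. I would pick any reduced word $\tau = s_{j_1}s_{j_2}\cdots s_{j_k}$ and define the intermediate permutations $\tau_m \coloneqq s_{j_{m+1}}\cdots s_{j_k}$, so that $\tau_k = \id$ and $\tau_0 = \tau$. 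Starting from $\InvFree(\lambda,w_0) = \InvFree(\tau_k\lambda,\tau_k w_0)$, apply $\varphi$ with the transpositions $s_{j_k}, s_{j_{k-1}},\ldots, s_{j_1}$ in this order; at step $m$ this produces a bijection $\InvFree(\tau_m\lambda,\tau_m w_0)\leftrightarrow \InvFree(\tau_{m-1}\lambda,\tau_{m-1} w_0)$, and composing the $k$ of them yields the claimed bijection $\InvFree(\lambda,w_0)\leftrightarrow \InvFree(\tau\lambda,\tau w_0)$.

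The crux of the argument is to verify that both hypotheses of $\varphi$ hold at each step. Using the convention in which $\rho\in \symS_n$ acts on compositions and basements by permuting positions, i.e.\ $(\rho v)_i = v_{\rho^{-1}(i)}$, the reducedness of $s_{j_1}\cdots s_{j_k}$ forces that left multiplication by $s_{j_m}$ increases the length of $\tau_m$ at every step. A standard fact in Coxeter combinatorics then gives $\tau_m^{-1}(j_m) < \tau_m^{-1}(j_m+1)$. Since $w_0$ is strictly decreasing from top to bottom, this inequality implies
\[
(\tau_m w_0)_{j_m} = (w_0)_{\tau_m^{-1}(j_m)} > (w_0)_{\tau_m^{-1}(j_m+1)} = (\tau_m w_0)_{j_m+1},
\]
and since $\lambda$ is a partition, the same inequality gives
\[
(\tau_m \lambda)_{j_m} = \lambda_{\tau_m^{-1}(j_m)} \geq \lambda_{\tau_m^{-1}(j_m+1)} = (\tau_m \lambda)_{j_m+1}.
\]
Therefore $\varphi$ is applicable at each stage, and the composition of all $k$ applications inherits column-set and major-index preservation from each factor. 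Summing $q^{\maj F}\xvec^F$ over the matched fillings on both sides then yields $\macdonaldH^{\tau w_0}_{\tau\lambda}(\xvec;q,0) = \macdonaldH_\lambda(\xvec;q,0)$.

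The main obstacle is really just bookkeeping: pinning down the convention for the action of $\tau$ on basements and shapes, and observing that reducedness of the word automatically guarantees the two hypotheses of $\varphi$. Note that different choices of reduced word produce different bijections, but since the theorem only asserts existence, any reduced word will do. It is also worth noting that the remark at the end of \cref{sec:main2} rules out pathological cases: if ever an intermediate $\InvFree(\tau_m\lambda,\tau_m w_0)$ were empty it would already force $\InvFree(\lambda,w_0)$ to be empty, so our inductive construction never collapses.
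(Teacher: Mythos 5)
Your proposal is correct and follows essentially the same route as the paper, which simply says to apply $\varphi$ repeatedly until the partition shape and decreasing basement are reached. You merely supply the bookkeeping the paper leaves implicit (choosing a reduced word for $\tau$ and checking, via the length-increasing criterion for left multiplication, that the hypotheses $\sigma_i>\sigma_{i+1}$ and $\alpha_i\geq\alpha_{i+1}$ of the inversion-free swapping proposition hold at every intermediate step), which is a welcome level of detail but not a different argument.
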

\begin{proof}
Consider the fillings contributing to $\macdonaldH^{\tau w_0}_{\tau\lambda}(\xvec;q,0)$.
We may repeatedly apply $\varphi$ until the resulting fillings are 
the ones contributing to $\macdonaldH_\lambda(\xvec;q,0)$.

\end{proof}
Although it might be tedious to carry out the bijections, remember that the resulting 
bijection is uniquely defined by the column-set preserving property.

We have not been able to prove the more general \cref{eq:generalMacdonaldHIdentity}, 
but computer experiments suggests the following refinement of the equality:
\begin{conjecture}
Let $\FIL(\lambda,\tau,C)$ be all fillings with shape $\lambda$, big basement $\tau$
and column sets $C$.
Then
\[
 \sum_{F \in \FIL(\lambda,w_0,C)} q^{\maj(F)} t^{\inv(F)}
 =
 \sum_{F \in \FIL(\tau\lambda,\tau w_0,C)} q^{\maj(F)} t^{\inv(F)}.
\]
\end{conjecture}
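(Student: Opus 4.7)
The plan is to generalize the strategy used to prove \cref{eq:macdonaldHIdentity} (the $t=0$ case) by extending the column-set preserving map $\varphi$ so that it also tracks the $\inv$ statistic. Writing $\tau$ as a product of adjacent transpositions, it suffices to construct, for each simple transposition $s_i$ satisfying $\sigma_i > \sigma_{i+1}$, a column-set preserving bijection
\[
\psi : \FIL(\alpha,\sigma,C) \longrightarrow \FIL(s_i\alpha, s_i\sigma, C)
\]
preserving both $\maj$ and $\inv$. Composing these bijections along a reduced word of $\tau$ would then yield the conjectured identity.

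To construct $\psi$, I would first reduce to the two-row case in the spirit of \cref{lem:tworowCase2}. A useful observation is that every inversion triple involves exactly two adjacent columns, so $\inv(F)$ decomposes as a sum over pairs of adjacent columns whose contributions depend only on the multisets and internal orderings of those two columns together with the relevant row lengths. This suggests generalizing the filling rule of \cref{lem:tworowCase2} by first transferring each column multiset to the corresponding column of the image filling (so that column sets are preserved), and then specifying how the internal ordering of each column is chosen so as to preserve $\inv$.

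The main obstacle is precisely this specification of internal orderings. In the inversion-free setting, \cref{prop:atMostOneInvFree} guarantees uniqueness of the filling given fixed column sets, but for general fillings with columns containing repeated entries there can be many valid orderings, and both $\maj$ and $\inv$ depend sensitively on them. A careful case analysis --- likely supported by computer verification analogous to \cref{lem:fillingcompatible2} --- would be needed to pin down an explicit filling rule and to verify that the joint statistic $(\maj,\inv)$ is preserved. It is conceivable that one should first apply $\varphi$ to a canonical inversion-free representative of each equivalence class modulo column-internal reorderings, and then distribute the internal orderings by a rule dictated by the two-column inversion count.

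Finally, compatibility of the resulting two-row rule with a fixed third row should follow by the same $2\times 3$-grid enumeration used in \cref{lem:fillingcompatible2}, since both the descent condition defining $\maj$ and the inversion-triple condition defining $\inv$ are local, involving only two adjacent columns and at most three rows at a time. A finite computer check should then certify that the local rule extends consistently to larger fillings, completing the proof modulo the two-row construction.
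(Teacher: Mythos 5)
This statement is left as an open conjecture in the paper --- the authors explicitly state that they were unable to prove the general identity \cref{eq:generalMacdonaldHIdentity} and offer this refinement only on the basis of computer experiments. So there is no proof in the paper to compare against, and the real question is whether your proposal closes the gap. It does not: what you have written is a strategy outline whose central step is deferred. The entire difficulty of the conjecture is the construction of a two-row rule that preserves the \emph{joint} statistic $(\maj,\inv)$ on arbitrary fillings, and your proposal says only that ``a careful case analysis would be needed to pin down an explicit filling rule.'' Everything in the paper's machinery ($\varphi$, \cref{prop:atMostOneInvFree}, the $2\times 3$-grid checks) lives at $t=0$, where the inversion-free condition forces uniqueness of the filling given its column sets. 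In the general setting $\FIL(\alpha,\sigma,C)$ is typically a large set, the map cannot be forced by column sets alone, and a rule preserving $\maj$ alone (which is what $\varphi$ gives) says nothing about how $\inv$ is redistributed across the fibers. You correctly identify this as ``the main obstacle,'' but identifying an obstacle is not the same as overcoming it.

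Two further points. First, your claim that the local $2\times 3$-grid enumeration of \cref{lem:fillingcompatible2} would certify compatibility with a third row cannot be carried out until a concrete candidate rule exists; there is nothing yet to feed into the computer check. Second, note that the conjectured identity is an equality of generating polynomials, so a bijection preserving both statistics is sufficient but not necessary --- one could instead try to match the joint distribution by a sign-reversing or algebraic argument (e.g.\ via the Demazure--Lusztig operators of \cref{subsec:operators} refined by column sets), and it is not obvious that the bijective route you sketch is the most tractable one. As it stands, the proposal restates the conjecture in a reduced form (one simple transposition, two rows) without proving that reduced form, so the statement remains open.
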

This indicates that one should be able to find a \emph{column-set preserving bijection}
proving Haglund's identity.

\begin{remark}
One can modify \cref{lem:tworowCase} to show that for every $\tau$, we have
\[
 [t^{top}]\macdonaldH^{\tau w_0}_{\tau\lambda}(\xvec;q,t) = [t^{top}]\macdonaldH_\lambda(\xvec;q,t),
\]
where $[t^{top}]$ indicate the coefficient of the maximal power of $q$,
and the bijection proving this is given by $\phi$.
To give an outline of the proof, note first that fillings contributing to
both sides of the above identity are exactly coinversion-free fillings.
Furthermore, notice in this case with the big basement, is that the first set in the right hand side of
\cref{prop:colSetPresInj} is empty.
\end{remark}

\bigskip

We conclude this paper by proving an analogue of \cref{thm:composition-charge},
in the case of inversion-free fillings.
Let $F$ be an inversion-free filling with a basement.
Define the following biword of $F$ (different from the above) as follows:
Let the top row be the non-basement entries of $F$ listed in decreasing order
and the bottom row be the corresponding columns the entries belong to,
listed in \emph{increasing} order in case of a tie in the first row.
Let the cocharge word of $F$, $\ccw(F)$, be the bottom row in this biword.
As an example,
\[
\young(71,91423,8213,6)
\qquad \longrightarrow \qquad
\begin{pmatrix}
9 & 8 & 7 & 6 & 4 & 3 & 3 & 2 & 2 & 1 & 1 & 1 \\
1 & 1 & 1 & 1 & 3 & 4 & 5 & 2 & 4 & 2 & 2 & 3 \\
\end{pmatrix}
\]
has subword decomposition $13542$, $1423$, $12$, $1$,
and cocharge value $4+1+0+0=5$. 
\begin{theorem}
\label{TheoremCocharge}
Let $F$ be an inversion-free filling of composition 
shape $\tau \lambda$ where $\lambda$ is partition and basement $\tau w_0$.
Then 
\[
\cocharge(\ccw(F))=\maj(F).
\]
where $\cocharge(\cdot)$ is defined as in \cite{Haglund2005Macdonald}.
\end{theorem}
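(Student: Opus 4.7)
The plan is to adapt the strategy used in the proof of \cref{thm:composition-charge}, replacing the column-set preserving map $\phi$ with the inversion-free map $\varphi$ of \cref{sec:main2}. Since the biword construction reads each column in a canonical way, $\ccw(F)$ depends only on the column sets of $F$, and because $\varphi$ preserves both column sets and major index, we can repeatedly apply it to reduce the pair (shape, basement) from $(\tau\lambda, \tau w_0)$ to $(\lambda, w_0)$. Both $\cocharge(\ccw(F))$ and $\maj(F)$ are invariant under this reduction, so we may assume $F$ has partition shape $\lambda$ with the standard big basement $w_0$.

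In this reduced case, \cref{prop:atMostOneInvFree} tells us that $F$ is uniquely determined by its column sets, and can be reconstructed column-by-column. As in the proof of \cref{thm:composition-charge}, I would first establish that the first non-basement column of $F$ is strictly decreasing from top to bottom: any tie or strict increase in this column would, together with the two adjacent basement $\infty$-entries, form a type A inversion triple (since the basement entries are always the largest and the tiebreaker places the lower row as ``larger'' in the triple ordering), contradicting inversion-freeness.

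With this monotonicity in hand, the cocharge subword decomposition of $\ccw(F)$ aligns with the rows of $F$ in decreasing order of row length. The leftmost smallest letter of $\ccw(F)$ --- paired via the biword with the largest non-basement entry --- lives in the topmost (and longest) row, and the cocharge scanning rule then extracts, as the first standard subword, the column indices of this row in the order prescribed by \cite{Haglund2005Macdonald}. Iterating on the remaining letters extracts subsequent rows in decreasing order of length, and a direct row-level computation shows that the cocharge of each standard subword equals the contribution of the corresponding row to $\maj(F)$; summing over all rows yields $\cocharge(\ccw(F)) = \maj(F)$.

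The main obstacle is carefully verifying this row-by-row correspondence: one needs to check that the cocharge scanning algorithm, when applied to $\ccw(F)$, traces out exactly one row of $F$ at a time in the correct order, and that removing these letters leaves a cocharge word equal to the $\ccw$ of the filling with the top row deleted. This is the direct analog of the subword-to-row correspondence in \cref{thm:composition-charge}, adapted to the decreasing-value sorting convention of the cocharge biword and the specifics of the cocharge extraction rule from \cite{Haglund2005Macdonald}.
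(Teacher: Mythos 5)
Your reduction step is exactly the paper's proof: $\ccw(F)$ depends only on the column sets of $F$, and repeatedly applying the column-set and major-index preserving map $\varphi$ of \cref{sec:main2} carries $(\tau\lambda,\tau w_0)$ to $(\lambda,w_0)$ while leaving both $\cocharge(\ccw(F))$ and $\maj(F)$ unchanged. Where you diverge is the partition base case. The paper disposes of it by citation: for partition shape with the big basement $w_0$, the identity $\cocharge(\ccw(F))=\maj(F)$ is the known cocharge formula from \cite{Haglund2005Macdonald} (with a footnote that inserting the basement changes nothing). You instead propose to re-derive it by transplanting the subword-to-row argument from \cref{thm:composition-charge}.

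That re-derivation is where your sketch has a genuine gap. Inversion-free fillings are unrestricted fillings (elements of $\FIL(\lambda,w_0)$ with no inversion triples), so entries may repeat within a column; the first non-basement column is therefore only \emph{weakly} monotone, and your claim that it is strictly decreasing --- together with the triple argument you give for it --- does not survive the tie-breaking convention for equal entries (a tie in a column with an $\infty$ basement entry to the left does not produce an inversion triple). This matters because strictness is precisely what anchored the subword extraction to rows in the charge case; with repeated entries, the decreasing-value biword, its column-index tie-breaking, and the cocharge scanning rule interact in a more delicate way, and you acknowledge yourself that the row-by-row correspondence is asserted rather than verified. The cleanest repair is the paper's: prove only the reduction (which you have) and invoke \cite{Haglund2005Macdonald} for the partition case, rather than attempting to reprove it.
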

\begin{proof}
The case where $\alpha$ is partition is given in \cite{Haglund2005Macdonald}.
\footnote{The proposition in \cite{Haglund2005Macdonald} does not include a basement 
explicitly, but adding $\omega_0$ as basement in the partition case 
leaves the analysis in \cite{Haglund2005Macdonald} unchanged.}
The result then follows by noting that the $\ccw(F)$ is only dependent on the columns
sets of $F$, so by applying the column-set and major-index preserving map $\varphi$ 
until the partition shape is reached, the statement follows.
\end{proof}
This answers a conjecture given in \cite{Nelsen2005} \footnote{In \cite{Nelsen2005}, only fillings of 
shape $(v_1,v_2,\ldots,v_k)$ and an index $\ell$ such that $v_1\ge v_2\ge\ldots\ge v_{\ell}$ 
and $v_1<v_{\ell+1}<v_{\ell+2}\ldots<v_k$ are considered and the (implicit) basement is of the 
same form as in \cref{TheoremCocharge}.}

\subsection*{Acknowledgement}

The authors would like to thank Jim Haglund for helpful discussions. 
The second author would also like to thank the first author for his mentorship.
PA is funded by the \emph{Knut and Alice Wallenberg Foundation} (2013.03.07).

\bibliographystyle{amsalpha}
\bibliography{bibliography}

\end{document}